
\documentclass[11pt,reqno]{amsart}%
\usepackage{amsmath, amssymb, amsfonts}
\usepackage{amsthm}
\usepackage{version}
\usepackage{verbatim}%
\usepackage{amsmath}%
\setcounter{MaxMatrixCols}{30}%
\usepackage{amsfonts}%
\usepackage{amssymb}%
\usepackage{graphicx}
\providecommand{\U}[1]{\protect\rule{.1in}{.1in}}
\theoremstyle{definition}
\marginparwidth 0pt
\oddsidemargin -0.2 in
\evensidemargin 0pt
\marginparsep 0pt
\topmargin -.2 in
\textwidth 6.7 in
\textheight 9.2 in
\newtheorem{theorem}{Theorem}

\newtheorem{definition}[theorem]{Definition}
\newtheorem{example}[theorem]{Example}

\newtheorem{lemma}[theorem]{Lemma}

\newtheorem{proposition}[theorem]{Proposition}
\newtheorem{remark}[theorem]{Remark}

\numberwithin{equation}{section}
\numberwithin{theorem}{section}

\begin{document}
\title[Obstacle problems on homogeneous groups]{The Obstacle Problem for Subelliptic Non-divergence\linebreak Form Operators
on Homogeneous Groups}
\author{Marie Frentz}
\address{Department of Mathematics, London School of Economics, London, United Kingdom}
\email{marie.frentz@gmail.com}
\author{Heather Griffin}
\address{Department of Mathematics, University of Arkansas, Fayetteville, USA}
\email{hgriffi@uark.edu}
\maketitle

\begin{abstract}
The main result established in this paper is the existence and uniqueness of
strong solutions to the obstacle problem for a class of subelliptic operators
in non-divergence form. The operators considered are structured on a set of
smooth vector fields in $\mathbf{R}^{n}$\noindent$,X=\{X_{0},X_{1}%
,...,X_{q}\}$, $q\leq n$, satisfying H{\"{o}}rmander's finite rank condition.
In this setting, $X_{0}$ is a lower order term while $\{X_{1},...,X_{q\}}$ are
building blocks of the subelliptic part of the operator. In order to prove
this, we establish an embedding theorem under the assumption that the set
$\{X_{0},X_{1},...,X_{q}\}$ generates a homogeneous Lie group. Furthermore, we
prove that any strong solution belongs to a suitable class of H\"{o}lder
continuous functions.

\vspace{.2in}

\noindent{2010 \textit{Mathematics Subject Classification.} 35K70, 35A01,
35A02, 35A09 }

\noindent\textit{Keywords and phrases: obstacle problem, H\"{o}rmander vector
fields, subellipticity, embedding theorem.}

\end{abstract}


\section{Introduction}

Obstacle problems form an important class of problems in analysis and applied
mathematics as they appear, in particular, in the mathematical study of
variational inequalities and free boundary problems. The classical obstacle
problem involving the Laplace operator is to find the equilibrium position of
an elastic membrane, whose boundary is held fixed, and which is constrained to
lie above a given obstacle. This problem is closely related to the study of
minimal surfaces and to inverse problems in potential theory. Other
applications where obstacle problems occur, involving the Laplace operator or
more general operators, include control theory and optimal stopping, financial
mathematics, fluid filtration in porous media, constrained heating and
elasto-plasticity. As classical references for obstacle problems and
variational inequalities, as well as their applications, we mention Frehse
\cite{Fr72}, Kinderlehrer-Stampacchia \cite{KS80}, \cite{KS} and Friedman
\cite{FA82}. For an outline of the modern approach to the regularity theory of
the free boundary, in the context of the obstacle problem, we refer to
Caffarelli \cite{C98}.

In this paper we continue to develop a theory for the obstacle problem for a
general class of second order subelliptic partial differential equations in
non-divergence form modeled on a system of vector fields satisfying
H{\"{o}}rmander's finite rank condition. In particular, we consider operators
\begin{equation}
\mathcal{H}=\sum_{i,j=1}^{q}a_{ij}(x)X_{i}X_{j}+\sum_{i=1}^{q}b_{i}%
(x)X_{i}-a_{0}(x)X_{0},\ \ x\in\mathbf{R}^{n},\ n\geq3, \label{operator}%
\end{equation}
where $q\leq n$ is a positive integer. In Section \ref{assumptions}, we will
state the assumptions in detail. To formulate the obstacle problem, let
$\mathcal{H}$ be as in (\ref{operator}), and let $f,g,\varphi,\gamma
:\overline{\Omega}\rightarrow\mathbf{R}^{n}$ be continuous and bounded
functions such that $g\geq\varphi$ on $\overline{\Omega}$. We consider the
problem
\begin{equation}%
\begin{cases}
\max\{\mathcal{H}u(x)+\gamma(x)u(x)-f(x),\varphi(x)-u(x)\}=0, & \text{in}%
\ \Omega,\\
u(x)=g(x), & \text{on}\ \partial\Omega.
\end{cases}
\label{e-obs}%
\end{equation}
We say that $u$ is a strong solution to (\ref{e-obs}) if $u\in\mathcal{S}%
_{X,loc}^{1}(\Omega)\cap C(\overline{\Omega})$ satisfy the differential
inequality (\ref{e-obs}) almost everywhere in $\Omega$, while the boundary
datum is attained at all points of $\partial\Omega.$ Here $\mathcal{S}%
_{X,loc}^{1}$ denotes certain intrinsic Sobolev spaces, defined in Subsection
\ref{fspace}. The main result is the following.

\begin{theorem}
\label{obstacle} Under the assumptions in Subsection \ref{assumptions}, there
exists a unique strong solution to the obstacle problem in \eqref{e-obs}.
Furthermore, given $p$, $1\leq p<\infty$, and an open subset $\Omega^{\prime
}\subset\subset\Omega$ there exists a positive constant $c$, depending on
$\mathcal{H}$, $\Omega^{\prime}$, $\Omega$, $p$, $||f||_{L^{\infty}(\Omega)}$,
$||\gamma||_{L^{\infty}(\Omega)}$, $||g||_{L^{\infty}(\Omega)}$ and
$||\varphi||_{L^{\infty}(\Omega)}$, such that
\begin{equation}
||u||_{\mathcal{S}_{X}^{p}(U)}\leq c. \label{Sp bound}%
\end{equation}

\end{theorem}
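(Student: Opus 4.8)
The plan is to obtain the solution by the classical penalization method adapted to the subelliptic non-divergence setting. First I would replace the one-sided constraint $\varphi - u \le 0$ by a family of semilinear equations: for $\varepsilon > 0$ let $\beta_\varepsilon : \mathbb{R} \to \mathbb{R}$ be a smooth, nondecreasing, convex penalty function with $\beta_\varepsilon(s) = 0$ for $s \ge 0$, $\beta_\varepsilon(s) \to -\infty$ as $\varepsilon \to 0$ for $s < 0$, and suitable bounds (for instance $\beta_\varepsilon(s) = s/\varepsilon$ for $s \le -\varepsilon$), and consider
\begin{equation*}
\mathcal{H} u_\varepsilon + \gamma u_\varepsilon = f + \beta_\varepsilon(u_\varepsilon - \varphi) \quad \text{in } \Omega, \qquad u_\varepsilon = g \quad \text{on } \partial\Omega.
\end{equation*}
Existence of a strong solution $u_\varepsilon \in \mathcal{S}^p_{X,loc}(\Omega) \cap C(\overline{\Omega})$ to this penalized Dirichlet problem should follow from the linear $\mathcal{S}^p$-solvability theory for operators of the form \eqref{operator} together with a fixed point / method of continuity argument for the (monotone) semilinear term; here one uses that $\beta_\varepsilon$ is Lipschitz for fixed $\varepsilon$. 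A comparison/maximum principle for $\mathcal{H} + \gamma$ gives a uniform $L^\infty$ bound on $u_\varepsilon$ depending only on the data $\|f\|_\infty, \|\gamma\|_\infty, \|g\|_\infty, \|\varphi\|_\infty$, and simultaneously a uniform bound $\|\beta_\varepsilon(u_\varepsilon - \varphi)\|_{L^\infty} \le C$ (test against the sign of the penalty where $u_\varepsilon < \varphi$, using that $\varphi$ is an admissible subsolution near the contact set, or bound the penalty term directly by comparison with the solution of $\mathcal{H} w + \gamma w = f$).

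With the uniform $L^\infty$ bound on $u_\varepsilon$ and on the right-hand side $f + \beta_\varepsilon(u_\varepsilon - \varphi)$ in hand, I would invoke the interior $\mathcal{S}^p$ estimates for $\mathcal{H}$ (Calderón–Zygmund-type estimates in the intrinsic Sobolev scale) to get, for every $\Omega' \subset\subset \Omega$ and every $p < \infty$,
\begin{equation*}
\|u_\varepsilon\|_{\mathcal{S}^p_X(\Omega')} \le c\big(\|u_\varepsilon\|_{L^\infty(\Omega)} + \|f + \beta_\varepsilon(u_\varepsilon-\varphi)\|_{L^\infty(\Omega)}\big) \le c,
\end{equation*}
with $c$ independent of $\varepsilon$ and depending only on the quantities listed in the statement. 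The embedding theorem proved earlier in the paper then shows $\mathcal{S}^p_X(\Omega') \hookrightarrow C^{0,\alpha}_X(\Omega'')$ for $p$ large and $\Omega'' \subset\subset \Omega'$, giving uniform Hölder bounds; by Arzelà–Ascoli and weak-$*$ compactness in $\mathcal{S}^p$ one extracts a subsequence $u_\varepsilon \to u$ converging locally uniformly and weakly in $\mathcal{S}^p_{X,loc}$. One passes to the limit: since $\beta_\varepsilon$ is bounded uniformly and $\beta_\varepsilon(s) = 0$ for $s \ge 0$, the limit satisfies $u \ge \varphi$ a.e., $\mathcal{H}u + \gamma u - f \le 0$ a.e., and on the set $\{u > \varphi\}$ the penalty vanishes in the limit so $\mathcal{H}u + \gamma u - f = 0$; hence $\max\{\mathcal{H}u + \gamma u - f, \varphi - u\} = 0$ a.e. The boundary datum is preserved because the $u_\varepsilon$ all equal $g$ on $\partial\Omega$ and the convergence is uniform up to the boundary (using a barrier argument at $\partial\Omega$ uniform in $\varepsilon$). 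The bound \eqref{Sp bound} for $u$ then follows from the uniform bound on $u_\varepsilon$ by weak lower semicontinuity of the $\mathcal{S}^p$-norm. Uniqueness is the easiest part: if $u_1, u_2$ are two strong solutions, a standard argument comparing them on $\{u_1 > u_2\}$ (where $u_1 > \varphi$, so $\mathcal{H}u_1 + \gamma u_1 - f = 0 \le \mathcal{H}u_2 + \gamma u_2 - f$) together with the maximum principle for $\mathcal{H} + \gamma$ and equal boundary data forces $u_1 \le u_2$, and symmetrically $u_2 \le u_1$.

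The main obstacle I anticipate is twofold. The first and more technical point is establishing the $\varepsilon$-uniform $L^\infty$ bound on the penalty term $\beta_\varepsilon(u_\varepsilon - \varphi)$; this is where the regularity of the obstacle $\varphi$ and the structure of $\mathcal{H}$ enter, and it must be done carefully so that the constant in \eqref{Sp bound} depends only on the $L^\infty$ norms of the data and not on higher norms of $\varphi$ — one typically localizes near the coincidence set and uses that $\mathcal{H}\varphi$ can be controlled, or else mollifies $\varphi$ and tracks the error. The second obstacle is that the interior $\mathcal{S}^p$ theory and the embedding theorem are being used as the engine of the whole argument, so all the estimates above must be phrased intrinsically with respect to the vector fields $X = \{X_0, X_1, \dots, X_q\}$ and the homogeneous group structure; the degeneracy of $\mathcal{H}$ (the direction $X_0$ being only first order) means one cannot simply quote Euclidean elliptic results, and the compactness step relies essentially on the embedding theorem established earlier in the paper rather than on the classical Sobolev embedding.
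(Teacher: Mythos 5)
Your overall strategy is the paper's: penalize with $\beta_\varepsilon$, get uniform bounds on the penalty term and on $\|u_\varepsilon\|_{L^\infty}$, apply the interior a priori $\mathcal{S}^p$ estimates and the embedding theorem to pass to the limit, and finish with barriers at the boundary. However, there is a genuine gap at the very first step, the existence of a solution to the penalized Dirichlet problem. You propose to get it from ``linear $\mathcal{S}^p$-solvability theory'' for $\mathcal{H}+\gamma$ plus a fixed point or method-of-continuity argument, but no such boundary solvability theory is available for these non-divergence subelliptic operators with drift and merely $C^{0,\alpha}_X$ coefficients: what exists (and what the paper quotes from Bramanti--Zhu) are \emph{interior} a priori $\mathcal{S}^p$ and Schauder estimates, which say nothing about solving the Dirichlet problem up to $\partial\Omega$. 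The paper circumvents this with a double regularization: it mollifies the coefficients $a_{ij}, b_i$ (the operator $\mathcal{H}^\delta$) and the data $f,\gamma,\varphi$ (and shifts the boundary datum to $g^\delta=g+\mu\delta$ so that $g^\delta\geq\varphi^\delta$), so that the linear problems have smooth coefficients and Bony's theorem applies; it then solves the semilinear penalized problem by a monotone iteration $u_j$, using interior Schauder estimates for compactness in $C^{2,\alpha}_{X,loc}$ and an explicit barrier (built from the exterior-normal assumption on $\partial\Omega$) for continuity up to the boundary. The limit in $\delta$ is taken at the end together with the limit in $\varepsilon$. Without this device (or some substitute giving solvability of the linear Dirichlet problem with non-smooth coefficients), your fixed-point step has no engine; note also that for fixed $\varepsilon$ the Lipschitz constant of $\beta_\varepsilon$ blows up, so a naive contraction would not be uniform, which is why a monotone scheme is used.

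A second, smaller point: the $\varepsilon$-uniform lower bound on $\beta_\varepsilon(u_\varepsilon-\varphi)$, which you correctly flag as delicate, is obtained in the paper by evaluating at an interior minimum point and using the maximum principle together with the structural hypothesis \eqref{dobs} on the obstacle, which guarantees $\mathcal{H}^\delta\varphi^\delta\geq\eta$ uniformly in $\delta$; since $\varphi$ is only Lipschitz, $\mathcal{H}\varphi$ is not defined pointwise, so the mollification of $\varphi$ is needed here too, not merely optional. Your uniqueness argument (comparison on $\{u_1>u_2\}$ via the maximum principle for $\mathcal{H}+\gamma$, using $\gamma\leq\gamma_0<0$) is fine and is in fact more explicit than what the paper writes.
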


To briefly put Theorem \ref{obstacle} into context we first consider the
parabolic case, that is when $q=n-1$ and $X=\{X_{0},X_{1},...,X_{q}\}$ is
identical to $\{\partial_{t},\partial_{x_{1}},...,\partial_{x_{n}}\}$. Then
there is an extensive literature on the existence of generalized solutions to
the obstacle problem in \eqref{e-obs} in Sobolev spaces, starting with the
pioneering papers \cite{FA75}, \cite{vM}, \cite{vM1} and \cite{McK}. The most
extensive and complete treatment of the obstacle problem for the heat equation
is due to Caffarelli, Petrosyan and Shahgholian \cite{CPS} and we refer to
\cite{CPS} for further references.

When $q<n,$ that is in the subelliptic case, with a general lower order term
$X_{0}$ there are, to our knowledge, no results concerning the problem in
\eqref{e-obs}. However, \cite{DGP07} and \cite{DGS03} treat the subelliptic
case when $X_{0}$ is lacking. Existence, uniqueness and regularity results for
solutions in the special case when $X_{0}=\partial_{t}$ are contained
in\ \cite{F11} and \cite{FGN12}. Similar results, but in the case of second
order differential operators of Kolmogorov type, are contained in \cite{FPP},
\cite{FNPP} and \cite{P08}. We note that the results presented here are new
due to the presence of the general lower order term $X_{0},$ and neither of
the above mentioned results cover the class of operators studied here, as
demonstrated in Section \ref{exempel}.

The proof of Theorem \ref{obstacle} is based on the classical penalization
technique introduced by Lewy and Stampacchia in \cite{LS69}. In particular, we
consider a family $(\beta_{\varepsilon})_{\varepsilon\in(0,1)}$ of smooth
functions such that, for fixed $\varepsilon\in(0,1),$ $\beta_{\varepsilon}$ is
an increasing function,%
\begin{equation}
\beta_{\varepsilon}(0)=0,\ \ \ \beta_{\varepsilon}(s)\leq\varepsilon
,\mbox{ whenever }s>0, \label{beta1}%
\end{equation}
and such that
\begin{equation}
\lim_{\varepsilon\rightarrow0}\beta_{\varepsilon}(s)=-\infty,\mbox{
whenever }s<0. \label{beta2}%
\end{equation}
A key step in the proof of Theorem \ref{obstacle} is to consider the penalized
problem%
\begin{equation}
\left\{
\begin{array}
[c]{ll}%
\mathcal{H}^{\delta}u_{\varepsilon,\delta}+\gamma^{\delta}u_{\varepsilon
,\delta}=f^{\delta}+\beta_{\varepsilon}(u_{\varepsilon,\delta}-\varphi
^{\delta})\ \ \ \ \  & \text{in }\Omega,\\
u_{\varepsilon,\delta}=g^{\delta} & \text{on }\partial\Omega,
\end{array}
\right.  \label{penalized2}%
\end{equation}
where the superscript $\delta$, $\delta\in(0,1)$, indicate certain mollified
versions of the objects at hand. The subscripts $\varepsilon,\delta$ in
$u_{\varepsilon,\delta}$ indicate that the solution depends on $\varepsilon$
through the penalizing function $\beta_{\varepsilon}$ and on $\delta$ through
the mollifier. We first prove that a classical solution to the problem in
(\ref{penalized2}) exists. By a classical solution we mean that
$u_{\varepsilon,\delta}\in C_{X}^{2,\alpha}(\Omega)\cap C(\overline{\Omega}),$
where the H{\"{o}}lder space $C_{X}^{2,\alpha}(\Omega)$ is defined in terms of
the intrinsic distance induced by the vector fields. In particular, this imply
that (\ref{penalized2}) is in fact satisfied pointwise.

Thereafter, a monotone iterative method is used to prove that $u_{\varepsilon
,\delta}$ is the limit of a sequence $\{u_{\varepsilon,\delta}^{j}%
\}_{j=1}^{\infty}$ where $u_{\varepsilon,\delta}^{j}\in C_{X}^{2,\alpha
}(\Omega)\cap C(\overline{\Omega})$. A key step in the argument is to ensure
compactness in $C_{X,loc}^{2,\alpha}(\Omega)\cap C(\overline{\Omega})$ of the
sequence constructed, which requires the use of certain a priori estimates. In
particular, we use interior Schauder estimates to conclude that there exists a
solution $u_{\varepsilon,\delta}$ to the problem in (\ref{penalized2}) such
that $u_{\varepsilon,\delta}\in C_{X,loc}^{2,\alpha}(\Omega)\cap
C(\overline{\Omega})$.

The final step is then to consider limits as $\varepsilon$ and $\delta$ tend
to $0$ and to prove that $u_{\varepsilon,\delta}\rightarrow u$ where $u$ is a
strong solution to the obstacle problem in \eqref{e-obs}. However, the
penalization technique only allows us to establish quite weak bounds on
$u_{\varepsilon,\delta}$ given that those bounds should be independent of
$\varepsilon$ and $\delta$. Hence, to prove that as $\varepsilon$,
$\delta\searrow0$ the function $u_{\varepsilon,\delta}\rightarrow u$ weakly in
$\mathcal{S}_{X,loc}^{p}$, $p\in\lbrack1,\infty)$, we use a priori interior
$\mathcal{S}_{X}^{p}$ estimates. To be able to subsequently conclude that in
fact $u_{\epsilon,\delta}\rightarrow u$ in $C_{X,loc}^{1,\alpha}(\Omega)\cap
C(\overline{\Omega})$, we also prove the following theorem.

\begin{theorem}
\label{Embedding}Under the assumptions in Subsection \ref{assumptions}, let
$\Omega^{\prime}\subset\subset\Omega$. If $u\in\mathcal{S}^{p}(\Omega)$, for
some $p\in(Q/2,Q)$, then%

\begin{equation}
||u||_{C^{1,\alpha}(\Omega^{\prime})}\leq c||u||_{\mathcal{S}^{p}(\Omega)}
\label{badda}%
\end{equation}
for$\ \alpha=(p-Q)/p$. Moreover, the constant $c$ only depend on $\mathcal{G}%
$, $\mu$, $p$, $s$, $\Omega$ and $\Omega^{\prime}.$
\end{theorem}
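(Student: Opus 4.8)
The plan is to prove the embedding $\mathcal{S}^p(\Omega) \hookrightarrow C^{1,\alpha}(\Omega')$ by a representation-formula argument combined with sharp estimates on the fundamental solution of the lifted operator on the homogeneous group $\mathcal{G}$. First I would recall that, under the homogeneous group assumption, the sub-Laplacian $\mathcal{L} = \sum_{i=1}^q X_i^2$ (or the relevant constant-coefficient model operator built from $X_1,\dots,X_q,X_0$) admits a homogeneous fundamental solution $\Gamma$ of degree $2-Q$, where $Q$ is the homogeneous dimension, with the horizontal derivatives $X_i\Gamma$ homogeneous of degree $1-Q$ and second derivatives $X_iX_j\Gamma$ of degree $-Q$ (a Calderón–Zygmund kernel). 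Then for $u \in \mathcal{S}^p(\Omega)$ and a cutoff $\psi \in C_0^\infty(\Omega)$ with $\psi \equiv 1$ on a neighborhood of $\Omega'$, I would write $\psi u$ as a group convolution $\psi u = \Gamma * \mathcal{L}(\psi u)$, expand $\mathcal{L}(\psi u)$ by the Leibniz rule into a term $\psi\, \mathcal{L}u$ plus lower-order terms involving $X_i\psi$, $X_iu$, and $u$ itself, all supported in $\Omega$ and controlled by $\|u\|_{\mathcal{S}^p(\Omega)}$.

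Next, to get the $C^{1,\alpha}$ bound I would differentiate the representation once along a horizontal field, $X_k(\psi u) = (X_k\Gamma) * \mathcal{L}(\psi u)$, and estimate this using the fractional-integration/Morrey-type inequality on homogeneous groups: convolution with a kernel homogeneous of degree $1-Q$ maps $L^p$ into $C^{0,\alpha}$ with $\alpha = 1 - (Q-1)/p \cdot(\dots)$ — more precisely, since $\mathcal{L}(\psi u) \in L^p$ with $p > Q/2$... here one must be careful: the gain from $X_k\Gamma$ (degree $1-Q$) applied to an $L^p$ function gives Hölder continuity of order $1 - (Q-1) + Q/p$ type; the cleaner route is to observe $\mathcal{L}(\psi u)\in L^p$ so $\psi u$ and $X_k(\psi u)$ are expressed via kernels of homogeneous degree $2-Q$ and $1-Q$ respectively, and the standard potential estimate (see the Folland–Stein / Bramanti–Brandolini theory) gives that convolution with a degree-$(\beta - Q)$ kernel sends $L^p \to C^{0,\gamma}$ with $\gamma = \beta - Q/p$ whenever $\beta - Q/p \in (0,1)$. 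Taking $\beta = 2$ requires $p \in (Q/2, Q)$ and yields exactly $\gamma = 2 - Q/p = (2p-Q)/p$ for $u$ itself and $\gamma = 1 - Q/p = (p-Q)/p$ for $X_ku$, which is the claimed $\alpha = (p-Q)/p$. The remaining convolutions against $X_i\psi$ etc. involve kernels that are at least as regular and are handled the same way or more easily.

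The main obstacle — and the step I would spend the most care on — is establishing the representation formula and the potential estimates in the non-flat setting: the vector fields $X_i$ have variable coefficients, so $\psi u = \Gamma * \mathcal{L}(\psi u)$ is not literally true and one must either lift to a free group and use a parametrix (Rothschild–Stein) or work directly with the left-invariant approximating operator at each point and absorb the commutator/remainder terms, which are of lower homogeneous order, back into $\mathcal{S}^p$. Concretely, I expect to invoke results of the type in Bramanti–Brandolini (or the earlier Folland, Rothschild–Stein machinery) guaranteeing that the relevant singular integral and fractional integral operators are bounded $\mathcal{S}^p \to \mathcal{S}^p$ and $L^p \to C^{0,\alpha}$, with constants depending only on the structural data $\mathcal{G}$, $\mu$, $p$, $s$, and the geometry of $\Omega' \subset\subset \Omega$. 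Once these tools are in place the proof is a matter of bookkeeping the homogeneous degrees; the content is in setting up the parametrix representation correctly and checking that every error term lands in a space with strictly better scaling so it does not degrade the Hölder exponent.
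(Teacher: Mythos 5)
Your core computation coincides with the paper's: both arguments represent $u$ (after a cutoff and a density reduction to $u\in C_0^\infty$) through the fundamental solution $\Gamma$ of the model operator $H=\sum_{i=1}^q X_i^2+X_0$, use that $X_i\Gamma$ is homogeneous of degree $1-Q$, and then run exactly the fractional-integral estimate that converts $\|Hu\|_{L^p}\le c\|u\|_{\mathcal{S}^p}$ into a H\"older modulus $\|y^{-1}\circ x\|^{1-Q/p}$ for $X_iu$ (the paper does this by hand with the dyadic annuli $\sigma_k$, $\widetilde\sigma_k$ and Proposition \ref{MMM} rather than quoting a potential estimate, and it imports the $C^{0,\alpha}$ bound on $u$ itself from \cite{BB00b}). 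One small caveat: the model operator must contain $X_0$, since $\sum_{i=1}^q X_i^2$ alone need not be hypoelliptic when brackets with $X_0$ are needed to span (Kolmogorov-type examples in Section \ref{exempel}); your parenthetical alternative covers this, but the pure sub-Laplacian route would fail.

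The genuine divergence is the step you single out as the main obstacle. Under the standing assumptions of Subsection \ref{assumptions}, $(\mathbf{R}^n,\circ,D_\lambda)$ is a homogeneous group on which $X_1,\dots,X_q$ are left invariant of degree one and $X_0$ of degree two; hence $H$ is a left-invariant, $D_\lambda$-homogeneous operator, Folland's theorem gives a global fundamental solution homogeneous of degree $2-Q$, and $u=\Gamma\ast Hu$ (group convolution) holds exactly for $u\in C_0^\infty$. No freezing of coefficients, Rothschild--Stein lifting, or parametrix with remainder absorption is needed, and none appears in the paper; the ``variable coefficients'' of the $X_i$ as operators on $\mathbf{R}^n$ are harmless because the convolution is taken with respect to the group law that makes them left invariant. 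Conversely, if the fields were only assumed to satisfy H\"ormander's condition, the parametrix construction you defer as bookkeeping is precisely the point the authors state is unavailable (it is why the homogeneity assumption was added), so it could not be waved through. Finally, your exponent count inherits a tension already present in the statement: $1-Q/p$ is positive only for $p>Q$, whereas the theorem is stated for $p\in(Q/2,Q)$; the paper's own proof of the gradient estimate explicitly requires $Q<p$, so this is a defect of the statement rather than of your argument, but you should not present $1-Q/p=(p-Q)/p$ as an admissible H\"older exponent on the stated range.
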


In the context of the circle of techniques and ideas used in this paper it is
also fair to mention \cite{BB00b}, \cite{BZ11}, \cite{FGN12} and \cite{FPP}.

Throughout the paper, when we write that a constant $c$ depends on the
operator $\mathcal{H}$, $c=c(\mathcal{H})$, we mean that the constant $c$
depends on $n$, $q$, $X=\{X_{0},X_{1},...,X_{q}\}$, $\{a_{ij}\}_{i,j=1}^{q}$,
$\{b_{i}\}_{i=1}^{q}$ and$\ \lambda$. Furthermore, if $\alpha$ and $\Omega$
are given, then $c$ only depends on $||a_{ij}||_{C_{X}^{0,\alpha}(\Omega)}$,
$||b_{i}||_{C_{X}^{0,\alpha}(\Omega)}$, and not on any other properties of
these coefficients.

The remainder of this paper is organized as follows. Subsection
\ref{assumptions} contains assumptions on the vector fields, the operator, and
the domain for which our results hold. In Section \ref{prel}, which is of
preliminary nature, we introduce some notation as well as some basic facts
about homogeneous groups and subelliptic metrics, in particular, we account
for the proper function spaces. In Section \ref{estm} we present some
estimates for subelliptic operators. Section \ref{SecProof} is devoted to the
proof of the main theorem, and in Section \ref{SecEm} we prove the embedding
theorem. Finally, in Section \ref{exempel}, we give some examples of operators
to which our results apply. In particular, we demonstrate when and how our
results overlap with known ones and provide the reader with examples for which
our results are new, and not previously considered in the literature.

\section{Assumptions\label{assumptions}}

Here we present the assumptions made to be able to prove Theorem
\ref{obstacle}.

\textbf{The vector fields.} $X=\{X_{0},X_{1},...,X_{q}\}$ is a system of
smooth vector fields in $\mathbf{R}^{n}$ satisfying two main conditions. The
first of which is H\"{o}rmander's finite rank condition. To further explain
this, recall that the Lie-bracket between two vector fields $X_{i}$ and
$X_{j}$ is defined as $[X]_{i,j}=[X_{i},X_{j}]=X_{i}X_{j}-X_{j}X_{i}.$ For an
arbitrary multiindex $\alpha=(\alpha_{1},..,\alpha_{\ell})$, $\alpha_{k}%
\in\{0,1,..,q\}$, we define weights
\[
w_{0}=2\text{ and }w_{i}=1\text{ for }i=1,...,q.
\]
Using this we set%
\begin{equation}
|\alpha|=\sum_{i=1}^{\ell}w_{\alpha_{i}} \label{multiindex}%
\end{equation}
and define the commutator $[X]_{\alpha}$ of length $|\alpha|$ by
\[
\lbrack X]_{\alpha}^{{}}=[X_{\alpha_{\ell}},[X_{\alpha_{\ell-1}}%
,...[X_{\alpha_{2}},X_{\alpha_{1}}]]].
\]
$X=\{X_{0},X_{1},...,X_{q}\}$ is said to satisfy H{\"{o}}rmander's finite rank
condition, introduced in \cite{H67}, if there exists an integer $s$,
$s<\infty$, such that
\begin{equation}
\text{Lie}(X_{0},X_{1},\ldots,X_{q})=\{[X]_{\alpha}^{{}}:\alpha_{i}%
\in\{1,...,q\},\ |\alpha|\leq s\}\text{ spans }\mathbf{R}^{n}\text{ at every
point}. \label{1.1x}%
\end{equation}

Moreover, we assume that there exists a family of dilations , $\{D_{\lambda
}\}_{\lambda>0}$ in $\mathbf{R}^{n},$ such that $X_{1}$, $X_{2}$,
$\ldots,X_{q}$ are of $D_{\lambda}$-homogeneous degree one, and $X_{0}$ is of
$D_{\lambda}$-homogeneous degree 2.

These two conditions are enough to ensure the existence of a composition law
$\circ$ such that the triplet $(\mathbf{R}^{n},\circ,D_{\lambda})$ is a
homogeneous Lie group where the $X_{i}$'s are left invariant, see Subsection
\ref{homogen}. However, this homogeneity assumption is \textbf{only} essential
to proving the embedding theorem, Theorem \ref{Embedding}. This means that,
should the embedding theorem become available for general H\"{o}rmander vector
fields, then this proof carries over directly to this more general case.

\textbf{The coefficients. }Concerning the $q\times q$ matrix-valued function
$A=A(x)=\{a_{ij}(x)\}=\{a_{ij}\}$ and $a_{0}$ we assume that $A=\{a_{ij}\}$ is
real symmetric, with bounded and measurable entries and that there exists
$\lambda\in\lbrack1,\infty)$ such that
\begin{equation}
\lambda^{-1}|\xi|^{2}\leq\sum_{i,j=1}^{q}a_{ij}(x)\xi_{i}\xi_{j}\leq
\lambda|\xi|^{2},\ \ \lambda^{-1}\leq a_{0}(x)\leq\lambda,\ \ \ \mbox{
whenever }x,\in\mathbf{R}^{n},\ \xi\in\mathbf{R}^{q}. \label{1.2}%
\end{equation}
Concerning the regularity of $a_{ij}$ and $b_{i}$ we will assume that $a_{ij}
$ and $b_{i}$ have further regularity beyond being only bounded and
measurable. In fact, we assume that
\begin{equation}
a_{ij},\ b_{i}\in C_{X,loc}^{0,\alpha}(\mathbf{R}^{n})\mbox{ whenever }i,j\in
\{1,..,q\}, \label{1.2+}%
\end{equation}
for $\alpha\in(0,1),$ where $C_{X,loc}^{0,\alpha}(\mathbf{R}^{n})$ is the
space of functions which are bounded and H{\"{o}}lder continuous on every
compact subset of $\mathbf{R}^{n}$. Here the subscript $X$ indicates that
H{\"{o}}lder continuity is defined in terms of the Carnot-Carath\'{e}odory
distance induced by the set of vector fields $X$, see Section \ref{fspace}. In
particular, by (\ref{1.2}) we may divide the entire equation by $a_{0},$ and
hence consider (\ref{operator}) with $a_{0}=1$.

\textbf{The domain. }$\Omega$ is assumed to be a bounded domain such that
there exists, for all $\varsigma\in\partial\Omega$ and in sense of Definition
\ref{eter}, an exterior normal $v$ to $\overline{\Omega}$ relative
$\widetilde{\Omega},$ such that $C(\varsigma)v\neq0.$ Here $\widetilde{\Omega
}$ is a neighborhood of $\overline{\Omega}$ and $C(\cdot) $ is the matrix
valued function given by $(X_{1},...,X_{q})^{T}=C(x)\cdot(\partial_{x_{1}%
},...,\partial_{x_{n}})^{T}$. The assumption $C(\varsigma)v\neq0$ assures that
(\ref{fafa}) holds, and thus, that we can use Theorem \ref{Bony}.

\textbf{The equation.} Let $f,\gamma,g,\varphi:\overline{\Omega}%
\rightarrow\mathbf{R}^{n}$ be such that $g\geq\varphi$ on $\overline{\Omega}$
and assume that $f,\gamma,g,\varphi$ are continuous and bounded on
$\overline{\Omega}$, with $\gamma\leq\gamma_{0}<0$.

Concerning the obstacle $\varphi$ we assume that $\varphi$ is Lipschitz
continuous on $\overline{\Omega}$, where Lipschitz continuity is defined in
terms of the intrinsic homogeneous distance. We also assume that there exists
a constant $c\in\mathbf{R}^{+}$ such that
\begin{equation}
\sum_{i,j=1}^{q}\zeta_{i}\zeta_{j}\int_{\Omega}X_{i}X_{j}\psi(x)\varphi
(x)dx\geq c|\zeta|^{2}\int_{\Omega}\psi(x)dx \label{dobs}%
\end{equation}
for all $\zeta\in\mathbf{R}^{q}$ and for all nonegative test functions
$\psi\in C_{0}^{\infty}(\Omega)$. The reader might want to think of this as a
convexity assumption.

\section{Preliminaries\label{prel}}

In this section we introduce notations and concepts to be used throughout the
paper. For a more detailed exposition we refer to the monograph \cite{BLU}
written by Bonfiglioli, Lanconelli and Uguzzoni.

In the following we assume that $X=\{X_{0},X_{1},...,X_{q}\}$ satisfies
\eqref{1.1x}. From now on we will write $Xf$ when a vector field $X$ act on a
function $f$ as a differential operator. We begin by defining the
Carnot-Carath\'{e}odory distance, also known as the control distance, see
\cite{BBP} and \cite{NSW85}.

\begin{definition}
For any $\delta>0,$ let $\Gamma(\delta)$ be the class of all absolutely
continuous mappings $\gamma:[0,1]\rightarrow\Omega$ such that%
\[
\gamma^{\prime}(t)=\sum_{i=0}^{q}\lambda_{i}(t)X_{i}(\gamma(t))\ \ \text{a.e.
}t\in(0,1)
\]
with $|\lambda_{0}(t)|\leq\delta^{2}$ and $|\lambda_{i}(t)|\leq\delta$ for
$i=1,...,q$. Then we define the Carnot-Carath\'{e}odory distance between two
points $x,y\in\Omega$ to be%
\[
d(x,y)=\inf\{\delta:\exists\gamma\in\Gamma(\delta)\text{ with }\gamma
(0)=x\text{ and }\gamma(1)=y\}.
\]

\end{definition}

It is a non-trivial result that any two points in $\Omega$ can be connected by
such a curve, and the proof relies on a connectivity result of Chow,
\cite{Cho40}. We remark that the Carnot-Carath\'{e}odory distance $d$ is in
fact a quasi-distance because the triangle inequality does not hold. Instead,
the inequality has the form
\[
d(x,z)\leq C(d(x,y)+d(y,z))
\]
where the constant $C$ depends on the vector fields. Moreover, there exist
constants $c_{1},c_{2}$, depending on $\Omega$, such that
\begin{equation}
c_{1}|x-y|\leq d(x,y)\leq c_{2}|x-y|^{1/s}\ \ \text{for all }x,y\in\Omega,
\label{Carnot-Euclid}%
\end{equation}
where $s$ is the rank in the H\"{o}rmander condition, see Proposition 1.1 in
\cite{NSW85}. This is not immediate, but follows from \cite[Section 5]{BBP}.

\subsection{Homogeneous groups\label{homogen}}

Let $\circ$ be a given group law on $\mathbf{R}^{n}$ and suppose that the map
$(x,y)\mapsto y^{-1}\circ x$ is smooth. Then $\mathbf{G}=(\mathbf{R}^{n}%
,\circ)$ is called a \emph{Lie group}. $\mathbf{G}$ is said \emph{homogeneous}
if there exists a family of \emph{dilations} $\left(  D_{\lambda}\right)
_{\lambda>0}$ on $\mathbf{G,}$ which are also automorphisms, of the form
\begin{equation}
D_{\lambda}(x)=D_{\lambda}(x^{(1)},...,x^{(l)})=(\lambda x^{(1)}%
,...,\lambda^{l}x^{(l)})=(\lambda^{\sigma_{1}}x_{1},...,\lambda^{\sigma_{n}%
}x_{n}), \label{str1}%
\end{equation}
where $1\leq\sigma_{1}\leq...\leq\sigma_{n}.$ Note that in \eqref{str1} we
have that $x^{(i)}\in\mathbf{R}^{n_{i}}$ for $i\in\{1,...,l\}$ and
$n_{1}+....+n_{l}=n$. On $\mathbf{G}$ we define a homogeneous norm $||\cdot||
$ as follows; for $x\in\mathbf{R}^{n}$, $x\neq0,$ set%
\[
||x||=\rho\text{ \ \ if and only if \ \ }|D_{1/\rho}(x)|=1,
\]
where $|\cdot|$ denotes the standard Euclidean norm, and set $||0||=0.$ This
norm satisfies the following:

\begin{description}
\item[i)] $||D_{\lambda}(x)||=\lambda||x||$ for all $x\in\mathbf{R}%
^{n},\ \lambda>0.$

\item[ii)] The set $\{x\in\mathbf{R}^{n}:||x||=1\}$ coincides with the
Euclidean unit sphere.

\item[iii)] There exist $c(\mathbf{G})\geq1$ such that for every
$x,y\in\mathbf{R}^{n}$%
\[
||x\circ y||\leq c(||x||+||y||)\text{ \ \ and \ \ }||x^{-1}||\leq c||x||.
\]

\end{description}

\noindent We also define a quasidistance $d$ on $\mathbf{R}^{n}$ through%
\[
d(x,y)=||y^{-1}\circ x||.
\]
For this quasidistance there exist $c=c(\mathbf{G})$ such that for all
$x,y,z\in\mathbf{R}^{n}$ the following holds;

\begin{description}
\item[iv)] $d(x,y)\geq0$ and $d(x,y)=0$ if and only if $x=y$.

\item[v)] $c^{-1}d(y,x)\leq d(x,y)\leq cd(y,x).$

\item[vi)] $d(x,y)\leq c(d(x,z)+d(z,y)).$
\end{description}

The previously mentioned Carnot-Caratheodory distance is one example of an
appropriate distance function. Alternatively, one could begin by defining
$||x||=\sum_{j=1}^{n}|x_{j}|^{1/\sigma_{j}}$, with the induced distance
$d(x,y)=||x^{-1}\circ y||$ satisfying the properties above as well.

\noindent We define balls with respect to $d$ by%
\[
B(x,r)=\{y\in\mathbf{R}^{n}:d(x,y)<r\}.
\]
In particular, we note that $D_{r}(B(0,1))=B(0,r).$ Moreover, in \cite[p.
619]{S} it is proved that the Lebesgue measure in $\mathbf{R}^{n}$ is the Haar
measure of $\mathbf{G}$ and that%
\begin{equation}
|B(x,r)|=|B(0,1)|r^{Q}, \label{bollar}%
\end{equation}
where $Q$ is the natural number
\begin{equation}
Q:=n_{1}+2n_{2}+...+ln_{l}, \label{e-Q}%
\end{equation}
also called the \emph{homogeneous dimension} of $\mathbf{G}$.

The {convolution} of two functions $f,g,$ defined on $\mathbf{G,}$ is defined
as%
\[
(f\ast g)(\zeta)=\int_{\mathbf{R}^{N}}f(\zeta\circ\xi^{-1})g(\xi)d\xi
\]
whenever the integral is well defined. Let $P$ be a differential operator and
let $\tau_{\xi}$ be the {left translation operator}, i.e., $(\tau_{\xi
}f)(\zeta)=f(\xi\circ\zeta)$ whenever $f$ is a function on $\mathbf{G}$. A
differential operator $P$ is said to be {left invariant} if
\[
P(\tau_{\xi}f)=\tau_{\xi}(Pf).
\]
Further, we say that the differential operator $P$ is {homogeneous of degree
}$\delta$ if, for every test function $f,\ \lambda>0$ and $\xi\in
\mathbf{R}^{N}$,
\[
P(f(D(\lambda)\xi))=\lambda^{\delta}(Pf)(D(\lambda)\xi).
\]
Similarly, a function $f$ is {homogeneous of degree }$\delta$ if
\[
f(D((\lambda)\xi))=\lambda^{\delta}f(\xi)\mbox{ whenever
$\lambda >0$, $\ \xi \in \mathbf{R}^{n}$}.
\]
Note that if $P$ is a differential operator homogeneous of degree $\delta_{1}$
and if $f$ is a function homogeneous of degree $\delta_{2}$ then $fP$ is a
differential operator homogeneous of degree $\delta_{1}-\delta_{2}$ and $Pf$
is a function homogeneous of degree $\delta_{2}-\delta_{1}.$We conclude this
section with a proposition which will be used to prove the embedding theorem,
see \cite[Proposition 1.15]{F75}.

\begin{proposition}
\label{MMM}Let $f\in C^{1}(\mathbf{R}^{n}\backslash\{0\})$ be homogeneous of
degree $\delta.$ Then there exist $c=c(\mathcal{G},f)>0$ and $M=M(\mathcal{G}%
)>1$ such that%
\[
|f(x\circ y)-f(x)|+|f(y\circ x)-f(x)|\leq c||y||\cdot||x||^{\delta-1},
\]
for every $x,y$ such that $||x||\geq M||y||$.
\end{proposition}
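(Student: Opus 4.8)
The plan is to prove this via a first-order Taylor-type expansion along the group, exploiting homogeneity and a compactness argument on the "unit-sphere" level set. Since $f$ is homogeneous of degree $\delta$ and $C^1$ away from the origin, the idea is to reduce the general inequality to a uniform estimate on a compact region where $||x||$ is comparable to $1$, and then rescale by the dilations.

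\medskip

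\noindent\emph{Step 1: Rescaling to the unit level.} Fix $x,y$ with $||x||\geq M||y||$, where $M>1$ is to be chosen. Write $\rho=||x||$ and set $\tilde{x}=D_{1/\rho}(x)$, $\tilde{y}=D_{1/\rho}(y)$, so that $||\tilde{x}||=1$ and $||\tilde{y}||\leq 1/M$. Since $D_{1/\rho}$ is an automorphism, $x\circ y=D_\rho(\tilde{x}\circ\tilde{y})$, and by homogeneity of $f$,
\[
f(x\circ y)-f(x)=\rho^\delta\bigl(f(\tilde{x}\circ\tilde{y})-f(\tilde{x})\bigr).
\]
Thus it suffices to show $|f(\tilde{x}\circ\tilde{y})-f(\tilde{x})|\leq c||\tilde{y}||$ whenever $||\tilde{x}||=1$ and $||\tilde{y}||$ is small, with $c$ uniform; multiplying back by $\rho^\delta=||x||^\delta$ and writing $||\tilde y||=||y||/||x||$ yields the claimed bound $c||y||\,||x||^{\delta-1}$. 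The symmetric term $f(y\circ x)-f(x)$ is handled identically using $y\circ x=D_\rho(\tilde{y}\circ\tilde{x})$.

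\medskip

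\noindent\emph{Step 2: The uniform local estimate on the sphere.} Choose $M$ large enough (depending only on $\mathbf{G}$, via the pseudo-triangle constant $c(\mathbf{G})$ from property iii)) that whenever $||\tilde x||=1$ and $||\tilde y||\le 1/M$, the whole segment between $\tilde x$ and $\tilde x\circ\tilde y$ stays in the compact annulus $K=\{z:\tfrac12\leq ||z||\leq 2\}$, which in particular avoids the origin. On the open set $\mathbf{R}^n\setminus\{0\}$ the Euclidean gradient $\nabla f$ is continuous, hence bounded on $K$; the map $(\tilde x,\tilde y)\mapsto \tilde x\circ\tilde y$ is smooth, so by the mean value theorem in Euclidean coordinates,
\[
|f(\tilde{x}\circ\tilde{y})-f(\tilde{x})|\leq \Bigl(\sup_{z\in K}|\nabla f(z)|\Bigr)\,|\tilde{x}\circ\tilde{y}-\tilde{x}|.
\]
It remains to bound $|\tilde{x}\circ\tilde{y}-\tilde{x}|$ by a constant times $||\tilde{y}||$. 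Since $||\tilde x||=1$ means (property ii) $|\tilde x|=1$, and the smooth map $\Phi(\tilde x,\tilde y)=\tilde x\circ\tilde y$ satisfies $\Phi(\tilde x,0)=\tilde x$, a first-order Taylor estimate in $\tilde y$ on the compact set $\{|\tilde x|=1,\ |\tilde y|\leq \tfrac12\}$ gives $|\tilde{x}\circ\tilde{y}-\tilde{x}|\leq c_0|\tilde{y}|$; finally the equivalence of $|\tilde y|$ and $||\tilde y||$ near $0$ (again using homogeneity and compactness of the sphere, shrinking $M$ further if needed so $||\tilde y||\le 1/M$ forces $|\tilde y|\le\tfrac12$) converts this to $|\tilde{x}\circ\tilde{y}-\tilde{x}|\leq c_0'\,||\tilde{y}||$. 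Collecting constants $c=c(\mathbf{G},f)$ finishes the proof.

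\medskip

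\noindent\emph{Main obstacle.} The delicate point is not the rescaling but keeping all constants uniform: one must ensure that $M$ can be chosen depending only on $\mathbf{G}$ (not on $f$) so that the relevant segment never approaches the origin, and that the comparisons between $|\cdot|$ and $||\cdot||$ are uniform. This is where compactness of the Euclidean unit sphere $\{||z||=1\}$ and continuity of the group law and dilations do the work; once $M$ is fixed, the dependence of $c$ on $f$ enters only through $\sup_{K}|\nabla f|$. One should also be slightly careful that the Euclidean mean value theorem is applied on a convex neighborhood — choosing $M$ large makes $\tilde x\circ\tilde y$ close to $\tilde x$, so the Euclidean segment between them lies in $K\subset\mathbf{R}^n\setminus\{0\}$, which suffices.
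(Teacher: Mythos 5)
Your argument is correct and is essentially the same as the standard proof of this statement: the paper itself offers no proof but cites Folland \cite[Proposition 1.15]{F75}, where the result is obtained exactly by this rescaling with the dilations to the level $||\tilde x||=1$, smoothness of the group law, the Euclidean mean value theorem, and a compactness argument giving constants depending only on $\mathbf{G}$ and on $\sup_K|\nabla f|$. Only cosmetic adjustments are needed: the compact annulus and the threshold $M$ should be fixed using the quasi-triangle constant $c(\mathbf{G})$ (so $K$ is an annulus with radii depending on $c(\mathbf{G})$ rather than literally $\{1/2\leq||z||\leq2\}$), ``shrinking $M$'' should read ``enlarging $M$'', and near the origin one has (and only needs) the one-sided comparison $|\tilde y|\leq c\,||\tilde y||$ rather than a genuine equivalence of the two norms.
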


\subsection{Function spaces\label{fspace}}

Let $U\subset\mathbf{R}^{n}$ be a bounded domain and let $\alpha\in(0,1]$.
Given $U$ and $\alpha$ we define the {H\"{o}lder space $C_{X}^{0,\alpha}(U)$
as $C_{X}^{0,\alpha}(U)=\{u:U\rightarrow\mathbf{R}:\ ||u||_{C^{0,\alpha}%
(U)}<\infty\}$, where
\begin{align*}
||u||_{C_{X}^{0,\alpha}(U)}  &  =|u|_{C_{X}^{0,\alpha}(U)}+||u||_{L^{\infty
}(U)},\\
|u|_{C_{X}^{0,\alpha}(U)}  &  =\sup\left\{  \frac{|u(x,t)-u(y,t)|}%
{d(x,y)^{\alpha}}:x,y\in U\text{ and}\ x\neq y\right\}  .
\end{align*}
Given a multiindex $I=(i_{1},i_{2},...,i_{m})$, with $0\leq i_{j}\leq q$,
}${1}\leq j\leq m,$ {we define the weighted length of the multiindex, $|I|,$
as in (\ref{multiindex})$\ $and we set $X^{I}u=X_{i_{1}}X_{i_{2}}\cdots
X_{i_{m}}u$. Now, given a domain $U$, an exponent $\alpha$ and an arbitrary
non-negative integer $k$ we define $C_{X}^{k,\alpha}(U):=\{u:U\rightarrow
\mathbf{R}:\ ||u||_{C_{X}^{k,\alpha}(U)}<\infty\}$, where
\[
||u||_{C_{X}^{k,\alpha}(U)}=\sum_{|I|\leq k}||X^{I}u||_{C_{X}^{0,\alpha}(U)}.
\]
Sobolev spaces are defined as%
\[
\mathcal{S}_{X}^{p}(U)=\left\{  u\in L^{p}(U):X_{0}u,\ X_{i}u,\ X_{i}X_{j}u\in
L^{p}(U)\text{ for}\ i,j=1,...,q\right\}
\]
and we define the Sobolev norm of a function }$u$ by{%
\[
||u||_{\mathcal{S}_{X}^{p}(U)}=||u||_{L^{p}(U)}+\sum\limits_{i=0}^{q}%
||X_{i}u||_{L^{p}(U)}+\sum\limits_{i,j=1}^{q}||X_{i}X_{j}u||_{L^{p}(U)}.
\]
}Above the $L^{p}$-norms are taken with respect to the standard Euclidean
metric, in particular, we integrate with respect to the Lebesgue measure. {Let
$U\subset\mathbf{R}^{n}$ be a domain, not necessarily bounded. If $u\in
C_{X}^{k,\alpha}(V)$ for every compact subset $V$ of $U$, then we say that
$u\in C_{X,loc}^{k,\alpha}(U).$ Similarly, if $u\in\mathcal{S}_{X}^{p}(V)$ for
every compact subset $V$ of $U$, then we say that $u\in\mathcal{S}_{X,loc}%
^{p}(U).$ }

An important result about compactly supported test functions multiplied by
Sobolev functions is the following lemma \cite[Corollary 1]{BB00b}.

\begin{lemma}
\label{cutoff}If $u\in\mathcal{S}^{p}(\Omega),\ 1\leq p<\infty,$ and $\phi\in
C_{0}^{\infty}(\Omega),$ then $u\phi\in\mathcal{S}_{0}^{p}(\Omega).$
\end{lemma}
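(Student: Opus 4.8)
\textbf{Proof strategy for Lemma \ref{cutoff}.}

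The plan is to verify directly that $u\phi$ lies in $\mathcal{S}^p(\Omega)$ and, moreover, that it can be approximated in the $\mathcal{S}^p$-norm by test functions supported in $\Omega$, which is what membership in $\mathcal{S}^p_0(\Omega)$ requires. The first and easiest point is that $u\phi \in L^p(\Omega)$, since $\phi$ is bounded; in fact $u\phi$ is compactly supported in $\Omega$. The substance of the argument is to compute the derivatives $X_i(u\phi)$ and $X_iX_j(u\phi)$ using the Leibniz rule, which is legitimate here because one factor, namely $\phi$, is smooth. Concretely, $X_i(u\phi) = (X_iu)\phi + u(X_i\phi)$, and iterating, $X_iX_j(u\phi) = (X_iX_ju)\phi + (X_ju)(X_i\phi) + (X_iu)(X_j\phi) + u(X_iX_j\phi)$, with the analogous formula for $X_0$ (here $X_0$ counts as a second-order term, but since $\phi$ is smooth and $X_0 u \in L^p$ by hypothesis, every term on the right is a product of an $L^p$ function with a smooth compactly supported function, hence in $L^p$). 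Thus $u\phi \in \mathcal{S}^p(\Omega)$ with support in a fixed compact subset of $\Omega$.

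To upgrade this to $u\phi \in \mathcal{S}^p_0(\Omega)$ I would mollify: let $\rho_\e$ be a standard Euclidean mollifier and set $v_\e = (u\phi)*\rho_\e$ (Euclidean convolution, since the $\mathcal{S}^p$ norms are defined with respect to Lebesgue measure). For $\e$ small enough, $v_\e \in C_0^\infty(\Omega)$ because $u\phi$ is compactly supported in $\Omega$. The distributional derivatives of $u\phi$ along the (smooth, hence nice) vector fields $X_0,\dots,X_q$ and their second-order combinations are all in $L^p$ by the previous paragraph. One then shows $v_\e \to u\phi$ in $\mathcal{S}^p(\Omega)$ as $\e \to 0$. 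The commutation of mollification with the differential operators is the only slightly delicate point: $X_i(v_\e) = X_i((u\phi)*\rho_\e)$ is \emph{not} exactly $(X_i(u\phi))*\rho_\e$ because the $X_i$ have variable (smooth) coefficients, but the commutator $[X_i, \cdot*\rho_\e]$ applied to an $L^p$ function tends to $0$ in $L^p$ by a classical Friedrichs-type commutator lemma. Applying this once for the first-order terms and twice (handling the intermediate products) for the second-order terms $X_iX_j$ and for $X_0$ yields $X_i v_\e \to X_i(u\phi)$, $X_iX_j v_\e \to X_iX_j(u\phi)$ and $X_0 v_\e \to X_0(u\phi)$ in $L^p(\Omega)$, giving the required approximation.

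The main obstacle is precisely this Friedrichs commutator estimate: one must check that for a smooth vector field $X = \sum_k c_k(x)\partial_{x_k}$ with bounded coefficients and a function $w \in L^p_{\mathrm{loc}}$ with $Xw \in L^p_{\mathrm{loc}}$ (in the distributional sense), one has $X(w*\rho_\e) - (Xw)*\rho_\e \to 0$ in $L^p$ on compact sets. Since the paper already cites \cite[Corollary 1]{BB00b} as the source of this very lemma, in practice I would simply invoke that reference for the approximation step rather than reprove the commutator lemma from scratch; all that genuinely needs to be said in the present paper is the Leibniz computation showing $u\phi \in \mathcal{S}^p(\Omega)$ with compact support, after which membership in $\mathcal{S}^p_0(\Omega)$ follows from the standard mollification argument together with the cited commutator estimate.
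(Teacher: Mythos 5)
The paper offers no proof of this lemma at all: it is quoted verbatim from \cite[Corollary 1]{BB00b}, so your fallback of "just cite the reference" coincides exactly with what the paper does, and your Leibniz computation showing $u\phi\in\mathcal{S}^{p}(\Omega)$ with compact support is correct and unobjectionable. The problem is with your standalone mollification argument. The classical Friedrichs commutator lemma gives $X_j\bigl((u\phi)\ast\rho_{\varepsilon}\bigr)-\bigl(X_j(u\phi)\bigr)\ast\rho_{\varepsilon}\to 0$ in $L^{p}$, which settles the first-order terms; but for the second-order terms you cannot simply "apply it twice." Writing $w=u\phi$, one has
\[
X_iX_j(w\ast\rho_{\varepsilon})-X_iX_jw
= X_i\bigl[X_j,\;\cdot\ast\rho_{\varepsilon}\bigr]w
+\bigl[X_i,\;\cdot\ast\rho_{\varepsilon}\bigr](X_jw)
+\bigl((X_iX_jw)\ast\rho_{\varepsilon}-X_iX_jw\bigr),
\]
and while the last two terms vanish by Friedrichs and the approximate identity property, the first term $X_i[X_j,\cdot\ast\rho_{\varepsilon}]w$ is a genuine double-commutator that the first-order lemma does not control. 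Its kernel contains pieces of the form $[c^j_k(x)-c^j_k(y)]\,X_i^x\partial_k\rho_{\varepsilon}(x-y)$, of size $\varepsilon\cdot\varepsilon^{-2}$, and the natural way to gain back a factor of $\varepsilon$ is to integrate by parts in $y$ --- which produces \emph{Euclidean} derivatives of $w$. These are precisely what the anisotropic space $\mathcal{S}^{p}_X$ does not control (the fields $X_1,\dots,X_q,X_0$ do not span at every point), so the step "$X_iX_jv_{\varepsilon}\to X_iX_j(u\phi)$ in $L^p$" is not justified as sketched. This is the genuine gap.

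Note also that the hypothesis you would need to import from \cite{BB00b} is not a side commutator estimate but essentially the lemma itself, so the sketch does not become self-contained by citing it for "the approximation step." The clean repair, available exactly under the standing assumption of this paper, is to abandon Euclidean mollification and mollify by \emph{group} convolution: take $\rho\in C_0^{\infty}$, $\rho_{\varepsilon}$ its anisotropic dilate normalized in the Haar (= Lebesgue) measure, and set $v_{\varepsilon}=\rho_{\varepsilon}\ast_{\mathbf{G}}(u\phi)$ with the mollifier on the side for which left-invariant operators pass to the other factor, so that $X^{I}v_{\varepsilon}=\rho_{\varepsilon}\ast_{\mathbf{G}}\bigl(X^{I}(u\phi)\bigr)$ \emph{exactly} for every monomial $X^{I}$ of weighted order $\le 2$. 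Then no commutator lemma is needed: $X^{I}v_{\varepsilon}\to X^{I}(u\phi)$ in $L^{p}$ because group convolution with an approximate identity converges in $L^{p}$, and $v_{\varepsilon}\in C_0^{\infty}(\Omega)$ for small $\varepsilon$ since the support of $v_{\varepsilon}$ shrinks to that of $u\phi$. With that replacement (or by simply deferring to \cite[Corollary 1]{BB00b} as the paper does), your Leibniz step plus the approximation yields the claim.
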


This lemma will be used when $\phi$ is a cutoff function. The existence of
smooth cutoff functions is not immediate, but by \cite[Lemma 5]{BB00b}, we
have the following.

\begin{lemma}
\label{cutoff2}For any $\sigma\in(0,1),r>0,k\in\mathbf{Z}_{+},$ there exists
$\phi\in C_{0}^{\infty}(\mathbf{R}^{n})$ with the following properties:%
\[
B_{\sigma r}\prec\phi\prec B_{\sigma^{\prime}r}\ \ \ \text{with }%
\sigma^{\prime}=(1+\sigma)/2;
\]%
\[
|X^{\alpha}\phi|\leq\frac{c(\mathcal{G},j)}{\sigma^{j-1}(1-\sigma)^{j}r^{j}%
}\text{ \ \ for all multiindices }|\alpha|=j\in\{1,...,k\}.
\]

\end{lemma}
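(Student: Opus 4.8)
The plan is to reduce the construction on a general homogeneous group to the familiar Euclidean construction by composing a standard smooth Euclidean bump with the group dilations and translations. First I would observe that, by left invariance of the vector fields $X_i$ and the dilation structure $D_\lambda$, it suffices to construct a single cutoff adapted to the pair of balls $B_\sigma \prec \phi \prec B_{\sigma'}$ centered at the origin with $r=1$: a general cutoff for $B_{\sigma r}(x_0) \prec \phi \prec B_{\sigma' r}(x_0)$ is then obtained by setting $\phi(y) = \phi_0\big(D_{1/r}(x_0^{-1}\circ y)\big)$, and one tracks how the homogeneous-degree-one (resp. degree-two) vector fields rescale under $D_{1/r}$ to produce the factor $r^{-j}$ for a weighted multiindex of length $j$. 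So the real content is the $r=1$ case with the $\sigma$-dependence of the constant to pin down.

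For the $r=1$ case, the key step is to pick a fixed function $h \in C^\infty(\mathbf{R})$ with $h \equiv 1$ on $(-\infty, 1]$, $h \equiv 0$ on $[2,\infty)$, $0 \le h \le 1$, and then define $\phi_0(y) = h\big(\ell(\|y\|)\big)$ where $\|\cdot\|$ is the homogeneous norm (whose unit sphere is the Euclidean one, by property~ii), so $\|\cdot\|$ is smooth away from the origin) and $\ell$ is an affine rescaling chosen so that $\phi_0 \equiv 1$ on $B_\sigma$ and $\phi_0 \equiv 0$ outside $B_{\sigma'}$; concretely $\ell(t) = 1 + (t-\sigma)/(\sigma'-\sigma) = 1 + 2(t-\sigma)/(1-\sigma)$, so the transition happens on the annulus $\sigma \le \|y\| \le \sigma'$ whose width is $(1-\sigma)/2$. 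Then I would estimate $X^I \phi_0$ for a weighted multiindex $I$ of length $j$: each application of an $X_i$ either hits a factor from the chain rule differentiating $h\circ\ell\circ\|\cdot\|$, producing a derivative of $h$ (bounded, independent of everything) times $\ell' = 2/(1-\sigma)$ times derivatives of $\|\cdot\|$. Since $\|\cdot\|$ is smooth and homogeneous of degree one on $\mathbf{R}^n\setminus\{0\}$, Proposition~\ref{MMM}-type homogeneity bookkeeping (or just: a degree-one-homogeneous smooth function has all its $X_i$-derivatives of weighted order $m$ bounded by $c\,\|y\|^{1-m}$ on the annulus, hence by $c(1-\sigma)^{-(m-1)}$ there since $\|y\|\asymp 1$), together with $\sigma \in (0,1)$ so that $\|y\|\ge \sigma$ on the relevant annulus contributes the $\sigma^{-(j-1)}$, yields a total bound of the form $c(\mathcal{G},j)\,(1-\sigma)^{-j}\sigma^{-(j-1)}$, which is exactly the claimed estimate at $r=1$.

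Finally I would reinstate $r$ and the center $x_0$: left translation $\tau_{x_0}$ commutes with the $X_i$ (left invariance), and $D_{1/r}$ conjugates $X_i$ (homogeneous of degree $1$) into $r^{-1} X_i \circ D_{1/r}$ and $X_0$ (degree $2$) into $r^{-2} X_0 \circ D_{1/r}$, so that $X^I[\phi_0 \circ D_{1/r}] = r^{-|I|}(X^I\phi_0)\circ D_{1/r}$ for a weighted multiindex $I$; combined with the $r=1$ estimate this gives the full statement, and the support conditions transform correctly because $D_{1/r}$ maps $B_{\sigma r}$ onto $B_\sigma$. I expect the main obstacle to be the careful bookkeeping in the second step --- verifying that differentiating the homogeneous norm repeatedly with the (possibly high-step) vector fields $X_i$ really does produce only the powers $\|y\|^{1-m}$ with constants depending solely on $\mathcal{G}$ and the order, since on a group of step $>1$ the $X_i$ involve non-constant coefficients; this is where one invokes smoothness of $\|\cdot\|$ off the origin together with the homogeneity grading of the $X_i$, so that $X^I$ of a degree-$\delta$ homogeneous function is degree $\delta - |I|$ homogeneous and hence controlled on the annulus. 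Everything else (choice of $h$, affine rescaling, the dilation/translation reduction) is routine.
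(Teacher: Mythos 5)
The paper offers no proof of this lemma at all --- it is quoted verbatim from \cite[Lemma 5]{BB00b} --- and your construction (compose a one-dimensional bump with the smooth homogeneous norm, use that $X^{I}$ of a degree-one homogeneous smooth function is homogeneous of degree $1-|I|$ hence bounded by $c\,\|y\|^{1-|I|}$, then reinstate $r$ and the center by dilation and left translation) is correct and is essentially the standard argument behind that reference. Only a cosmetic slip: in your parenthetical the derivative factors of the norm on the transition annulus are bounded by $c\,\sigma^{-(m-1)}$, not $c\,(1-\sigma)^{-(m-1)}$, but your final accounting ($(1-\sigma)^{-j}$ from the affine rescaling, $\sigma^{-(j-1)}$ from $\|y\|\geq\sigma$, $r^{-j}$ from the weighted homogeneity of the $X_i$ under $D_{1/r}$) is exactly right.
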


\section{Estimates for subelliptic operators with drift\label{estm}}

Here we collect a number of theorems which concern subelliptic operators with
drift, all of which are important tools in the proof of the obstacle problem.
We begin with a result of Bony \cite[Theoreme 5.2]{B69} which is both a
comparison principle and a result on solvability of the Dirichlet problem.
Before we state the theorem we introduce the notion of an exterior normal.

\begin{definition}
\label{eter} A vector $v$ in $\mathbf{R}^{n}$ is an exterior normal to a
closed set $S\subset\mathbf{R}^{n}$ relative an open set $U$ at a point
$x_{0}$ if there exists an open standard Euclidean ball $B_{E}$ in
$U\backslash S$ centered at $x_{1}$ such that $x_{0}\in\overline{B_{E}}$ and
$v=\lambda(x_{1}-x_{0})$ for some $\lambda>0.$
\end{definition}

\begin{theorem}
\label{Bony}(Bony) Let $U\subset\mathbf{R}^{n}$ be a bounded domain and let
$H:=\sum_{i=1}^{r}Y_{i}^{2}+Y_{0}+\gamma=$ $\sum_{i,j=1}^{n}a_{ij}^{\ast
}\partial_{x_{i}x_{j}}+\sum_{i=1}^{n}a_{i}^{\ast}\partial_{x_{i}}+\gamma$.
Assume that the set of vector fields $Y=\{Y_{0},Y_{1},...,Y_{r}\}$ satisfies
H\"{o}rmander%
\'{}%
s finite rank condition, that $\gamma(x)\leq\gamma_{0}<0$ for all $x\in U$ and
that $a_{ij}^{\ast},\ a_{i}^{\ast},\ \gamma\in C^{\infty}(U).$ In addition,
assume that for all $x\in U$ and for all $\xi\in\mathbf{R}^{n}$ the quadratic
form $\sum_{i,j=1}^{n}a_{ij}^{\ast}(x)\xi_{i}\xi_{j}\geq0.\ $Further, assume
that $D$ is a relatively compact subset of $U$ and that at every point
$x_{0}\in\partial D$ there exists an exterior normal $v$ such that%
\begin{equation}
\sum_{i,j=1}^{n}a_{ij}^{\ast}(x_{0})v_{i}v_{j}>0. \label{fafa}%
\end{equation}
Then, for all $g\in C(\partial D)$ and $f\in C(\overline{D})$, the Dirichlet
problem%
\[
\left\{
\begin{array}
[c]{cc}%
Hu=-f, & \mbox{in $D$},\\
u=g, & \mbox{on $\partial D$},
\end{array}
\right.
\]
has a unique solution $u\in C(\overline{D})$. Furthermore, if $f\in C^{\infty
}(D)$, then $u\in C^{\infty}(D)$ and if $f$ and $g$ are both positive then so
is $u$.
\end{theorem}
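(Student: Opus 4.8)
This is the classical theorem of Bony, and the plan is to reconstruct its proof in three stages — uniqueness/comparison, existence by Perron's method, and the regularity and positivity assertions — all resting on the degenerate ellipticity of $H$, the strict negativity $\gamma\le\gamma_0<0$, the non-characteristic boundary condition \eqref{fafa}, and hypoellipticity.

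\textbf{Uniqueness and comparison.} Since $\sum_{i,j}a_{ij}^\ast(x)\xi_i\xi_j\ge 0$ the operator $H$ is degenerate elliptic, and $\gamma\le\gamma_0<0$. The weak maximum principle then holds: if $Hv\ge 0$ in $D$ and $\limsup v\le 0$ on $\partial D$, then $v\le 0$ in $D$, because at an interior positive maximum of $v$ the second order term of $Hv$ is $\le 0$, the first order term vanishes, and $\gamma v<0$, a contradiction; the usual device of adding a small multiple of a fixed function $W$ with $HW<0$ removes the strictness assumption. Applying this to $v=u_1-u_2$ and to $v=u_2-u_1$ gives uniqueness of $u\in C(\overline D)$.

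\textbf{Barriers at the boundary.} Fix $x_0\in\partial D$ with exterior Euclidean ball $B_E=B(x_1,R)$, $x_0\in\overline{B_E}$, and exterior normal $v=\lambda(x_1-x_0)$. By \eqref{fafa} and continuity of the $a_{ij}^\ast$ there is a neighborhood $N$ of $x_0$ and $\delta_0>0$ with $\sum_{i,j}a_{ij}^\ast(x)(x-x_1)_i(x-x_1)_j\ge\delta_0$ on $N$. Take $w(x)=e^{-\kappa R^2}-e^{-\kappa|x-x_1|^2}$; a direct computation of $Hw$ shows that for $\kappa$ large (depending only on $\delta_0$ and the sup of the coefficients of $H$ over $N$) one has $Hw\le -1$ on $N\cap D$, while $w(x_0)=0$ and $w>0$ on $\overline D\setminus\{x_0\}$. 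Combining $w$ with a large positive multiple of a global function $W$ satisfying $HW\le -1$ (available since $\gamma\le\gamma_0<0$) yields, for every $\varepsilon>0$, an upper barrier $g(x_0)+\varepsilon+C_\varepsilon w$ and a lower barrier $g(x_0)-\varepsilon-C_\varepsilon w$ relative to the boundary data $g$ near $x_0$.

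\textbf{Existence, regularity, positivity.} Set $u(x)=\sup\{v(x):v\in C(\overline D),\ Hv\ge -f\text{ in }D,\ v\le g\text{ on }\partial D\}$; the comparison principle makes this family nonempty and bounded above, so $u$ is well defined. The standard Perron arguments — lifting a subsolution on a small ball $B\subset\subset D$ to the solution of the Dirichlet problem on $B$, and taking suprema/harmonic-type lowerings — show $u$ is simultaneously a sub- and a supersolution in $D$, hence a solution; the barriers above force $u(x)\to g(x_0)$ as $x\to x_0$, so $u\in C(\overline D)$ with $u=g$ on $\partial D$. Since $Y=\{Y_0,\dots,Y_r\}$ satisfies Hörmander's rank condition and the coefficients are $C^\infty$, $H$ is hypoelliptic by Hörmander's theorem, so $f\in C^\infty(D)$ gives $u\in C^\infty(D)$. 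Finally, if $f\ge 0$ and $g\ge 0$, then $0$ is a subsolution ($H0=0\ge -f$) with $0\le g$, so $u\ge 0$; Bony's strong maximum principle — the set where $u$ attains its minimum is invariant under the flows of $Y_0,\dots,Y_r$ and hence, by the rank condition, is all of $D$ or empty — gives $u>0$ in $D$ unless $f\equiv 0$ and $g\equiv 0$.

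\textbf{Main obstacle.} The delicate point is the interior step of Perron's method: because $H$ is genuinely degenerate, one cannot invoke classical solvability of the Dirichlet problem on balls directly. The cleanest remedy is elliptic regularization: solve the uniformly elliptic, smooth-coefficient problems for $H_\varepsilon=H+\varepsilon\Delta$, and let $\varepsilon\downarrow 0$, using the $\varepsilon$-uniform sup bound from the weak maximum principle together with interior subelliptic (hypoelliptic) estimates to extract a locally convergent subsequence whose limit solves $Hu=-f$. This is precisely where the Hörmander condition does the essential work, and it is the step requiring the most care.
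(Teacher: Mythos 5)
The paper does not prove this result at all: Theorem \ref{Bony} is imported verbatim from Bony \cite[Th\'eor\`eme 5.2]{B69} and used as a black box, so there is no in-paper argument to compare yours against. Your outline does follow the classical route (weak maximum principle from $\gamma\le\gamma_0<0$ and degenerate ellipticity, exponential barriers built from the non-characteristic condition \eqref{fafa}, existence via regularization/Perron, interior smoothness from H\"ormander hypoellipticity, positivity from the strong maximum principle), and the barrier you write down is essentially the one the authors themselves use later in Remark \ref{remark}. As a reconstruction of Bony's proof strategy it is sound in outline.

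It is not, however, a complete proof, and the gaps are not only the one you flag. First, the existence step is explicitly deferred: ``solve for $H_\varepsilon=H+\varepsilon\Delta$ and pass to the limit'' requires (a) estimates uniform in $\varepsilon$ strong enough to give a limit that is \emph{continuous} in $D$ --- a weak-$*$ $L^\infty$ limit is not enough --- and (b) an argument for merely continuous $f$. A clean repair is to first treat $f\in C^\infty$, where uniform sup bounds, $\varepsilon$-uniform barriers and hypoellipticity of $H$ applied to the distributional limit give a solution in $C^\infty(D)\cap C(\overline D)$, and then to pass to general $f\in C(\overline D)$ by smooth approximation, using the estimate $\|u_k-u_m\|_{L^\infty}\le |\gamma_0|^{-1}\|f_k-f_m\|_{L^\infty}$ furnished by the maximum principle to get uniform convergence in $C(\overline D)$; without some such scheme the claim $u\in C(\overline D)$ does not follow from your compactness argument. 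Second, your uniqueness argument evaluates $Hv$ pointwise at an interior maximum of $v=u_1-u_2$, but the solutions are only asserted to lie in $C(\overline D)$ and the equation must be read distributionally; the fix is easy but should be stated: $Hv=0$ in $\mathcal D'(D)$ and $H$ is hypoelliptic, so $v\in C^\infty(D)$ and only then does the pointwise maximum-principle computation apply. With these two points supplied (and the positivity argument phrased through the comparison principle plus the strong maximum principle, as you indicate), your sketch becomes a correct proof of the quoted theorem.
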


We remark that we cannot use this theorem directly since we only assume that
our coefficients $a_{ij}$ and $b_{i}$ are H\"{o}lder continuous. However, for
smooth coefficients and using linear algebra, our operator $\mathcal{H}$ in
(\ref{operator}) can be rewritten as a H\"{o}rmander operator in accordance
with Bony's assumptions. We will also use a Schauder type estimate, the
particular one we use can be found in \cite[Theorem 2.1]{BZ11}.

\begin{theorem}
\label{Schauder} (Schauder estimate) Assume that the operator $\mathcal{H}$ is
strucutured on a set of smooth H\"{o}rmander vector fields and that the
coefficients $a_{ij},b_{i}\in C_{X}^{0,\alpha}(\Omega)$ for some $\alpha
\in(0,1),\ a_{0}\in L^{\infty}(\Omega).$ Then for every domain $\Omega
^{\prime}\subset\subset\Omega$ there exists a constant $c,\ $depending on
$\Omega^{\prime}$, $\Omega$, $X$, $\alpha$, $\lambda$, $||a_{ij}%
||_{C_{X}^{0,\alpha}(\Omega)}$, $||b_{i}||_{C_{X}^{0,\alpha}(\Omega)}$ and
$||a_{0}||_{C_{X}^{0,\alpha}(\Omega)}$ such that for every $u\in
C_{X}^{2,\alpha}(\Omega)$ one has%
\[
||u||_{C_{X}^{2,\alpha}(\Omega^{\prime})}\leq c\left\{  ||\mathcal{H}%
u||_{C_{X}^{0,\alpha}(\Omega)}+||u||_{L^{\infty}(\Omega)}\right\}  .
\]

\end{theorem}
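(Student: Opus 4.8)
The plan is the classical freezing-of-coefficients argument, adapted to the subelliptic scale, carried out in three stages: an a priori estimate for the model operator with constant (frozen) coefficients, a local perturbation estimate on small intrinsic balls, and a covering argument to pass to $\Omega'$. After dividing the equation by $a_0$ (legitimate by \eqref{1.2}) we may assume the drift coefficient is $1$, so we write $\mathcal{H}=\sum_{i,j}a_{ij}X_iX_j+\sum_i b_iX_i-X_0$ and, for a fixed point $x_0\in\Omega$, let $\mathcal{H}_{x_0}:=\sum_{i,j}a_{ij}(x_0)X_iX_j-X_0$ be the model operator.

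Stage one, the model estimate. I would first treat the case in which $X_1,\dots,X_q$ generate a homogeneous group $\mathbf{G}$ and $X_0$ is left invariant of homogeneous degree two — exactly the extra structure available in Subsection \ref{assumptions}. Writing $\sum_{i,j}a_{ij}(x_0)X_iX_j=\sum_i Y_i^2$ with $Y_i$ a constant linear combination of the $X_j$ (hence left invariant of degree one), the operator $\mathcal{H}_{x_0}$ is left invariant, $D_\lambda$-homogeneous of degree two, and hypoelliptic by \eqref{1.1x}; by the Folland–Rothschild–Stein theory it admits a fundamental solution $\Gamma_{x_0}$ homogeneous of degree $2-Q$ and smooth off the origin. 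Then each $X_iX_j\Gamma_{x_0}$ is homogeneous of degree $-Q$, smooth away from $0$, and has vanishing average on spheres, so convolution against it is a Calderón–Zygmund operator bounded on $C_X^{0,\alpha}$; because the frozen matrices $\{a_{ij}(x_0)\}$ all obey the one ellipticity bound \eqref{1.2}, these bounds are uniform in $x_0$. Representing $v=\Gamma_{x_0}\ast(\mathcal{H}_{x_0}v)$ for $v$ supported in a fixed ball and differentiating under the convolution gives $\|v\|_{C_X^{2,\alpha}}\le c\,\|\mathcal{H}_{x_0}v\|_{C_X^{0,\alpha}}$ with $c$ independent of $x_0$. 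For general smooth Hörmander vector fields the same conclusion follows after Rothschild–Stein lifting to a free homogeneous group, checking that the additional ``error'' vector fields contribute only terms of lower weighted order.

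Stage two, perturbation. Given $u\in C_X^{2,\alpha}(\Omega)$ and $x_0$, pick by Lemma \ref{cutoff2} a cutoff $\phi$ with $B(x_0,r/2)\prec\phi\prec B(x_0,r)$ and $B(x_0,r)\subset\Omega$. Then
\[
\mathcal{H}_{x_0}(\phi u)=\phi\,\mathcal{H}u+\sum_{i,j}\bigl(a_{ij}(x_0)-a_{ij}\bigr)X_iX_j(\phi u)-\sum_i b_iX_i(\phi u)+R_\phi u,
\]
where $R_\phi u$ collects the commutators of $\phi$ with the $X_i$, each involving only weighted-order $\le 1$ derivatives of $u$ against (negative powers of $r$ times) derivatives of $\phi$. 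Apply the model estimate to $v=\phi u$. The term $\sum_{i,j}(a_{ij}(x_0)-a_{ij})X_iX_j(\phi u)$ has $C_X^{0,\alpha}(B(x_0,r))$-norm at most $c\bigl(r^\alpha+\sup_{B(x_0,r)}|a_{ij}-a_{ij}(x_0)|\bigr)\|\phi u\|_{C_X^{2,\alpha}}$, which by the Hölder continuity of $a_{ij}$ falls below $\tfrac12$ of the left side once $r$ is small, and is absorbed. The remaining terms are bounded by $\|\mathcal{H}u\|_{C_X^{0,\alpha}(B(x_0,r))}$ plus a multiple of $\|u\|_{C_X^{1,\alpha}(B(x_0,r))}$; an interpolation inequality in the intrinsic Hölder scale, $\|u\|_{C_X^{1,\alpha}}\le\varepsilon\|u\|_{C_X^{2,\alpha}}+C_\varepsilon\|u\|_{L^\infty}$, lets the $C_X^{1,\alpha}$-part be absorbed as well, leaving $\|u\|_{C_X^{2,\alpha}(B(x_0,r/2))}\le c\bigl(\|\mathcal{H}u\|_{C_X^{0,\alpha}(\Omega)}+\|u\|_{L^\infty(\Omega)}\bigr)$.

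Stage three, globalization: cover $\overline{\Omega'}$ by finitely many half-balls $B(x_0,r/2)$ with $r$ below the absorption threshold — which depends only on $\lambda$, $X$, the Hölder moduli of the $a_{ij}$, and $\mathrm{dist}(\Omega',\partial\Omega)$ — and sum, obtaining the stated inequality with $c$ depending exactly on the listed data. The step I expect to be the real obstacle is Stage one: showing that the lifting does not spoil the estimate and, above all, that the Calderón–Zygmund constants for $X_iX_j\Gamma_{x_0}$ depend on the frozen coefficients only through $\lambda$, so that the small-ball constant is genuinely uniform in $x_0$; granting that, the rest is the standard cutoff-and-interpolation bookkeeping. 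Since this estimate is precisely \cite[Theorem 2.1]{BZ11}, in the paper we merely invoke it.
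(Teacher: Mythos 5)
The paper offers no proof of this statement: it is taken from \cite[Theorem 2.1]{BZ11}, together with the remark that \cite{BZ11} treats only $b_i\equiv 0$ and that the lower-order terms $b_iX_i$ are incorporated by arguing as in the proof of Theorem 10.1 in \cite{BB07}. Your sketch reconstructs, in outline, exactly the method of that reference (freezing of coefficients, homogeneous fundamental solutions of the frozen operator after Rothschild--Stein lifting in the general H\"{o}rmander case, uniform Calder\'on--Zygmund bounds in $C_X^{0,\alpha}$, cutoff and perturbation, interpolation, covering), and you end by invoking the same citation, so the two routes coincide; moreover your absorption of the $b_iX_i$ terms via the intrinsic interpolation inequality is precisely the substance of the \cite{BB07} argument the authors allude to. One bookkeeping slip, were the sketch to stand alone: the $C_X^{0,\alpha}$-norm of $\bigl(a_{ij}(x_0)-a_{ij}\bigr)X_iX_j(\phi u)$ is not bounded by $c\bigl(r^\alpha+\sup_{B(x_0,r)}|a_{ij}-a_{ij}(x_0)|\bigr)\|\phi u\|_{C_X^{2,\alpha}}$, because the H\"{o}lder seminorm of $a_{ij}-a_{ij}(x_0)$ does not shrink with $r$; the product rule leaves a term $[a_{ij}]_{C_X^{0,\alpha}}\,\|X_iX_j(\phi u)\|_{L^\infty(B(x_0,r))}$, which must itself be absorbed by the same interpolation inequality you already use --- a standard fix that does not change the structure --- while Stage one (hypoellipticity and homogeneity of $\mathcal{H}_{x_0}$, uniformity in $x_0$, control of the lifting errors) is, as you acknowledge, the actual content of \cite{BZ11}, and deferring it there is exactly what the paper does.
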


We emphasize that in \cite{BZ11} this is only proved when the lower order
terms $b_{i}\equiv0.$ However, by arguing as in the proof of Theorem 10.1 in
\cite{BB07} this also hold for $b_{i}\in C_{X}^{0,\alpha}(\Omega).$ This
Schauder estimate will be used together with an a priori $\mathcal{S}^{p}$
interior estimate to assure proper convergence of a constructed sequence,
converging to a solution to the obstacle problem. The proof is to be found in
\cite[Theorem 2.2]{BZ11}

\begin{theorem}
\label{a priori} (A priori $\mathcal{S}^{p}$ interior estimate) Assume that
the operator $\mathcal{H}$ is strucutured on a set of smooth H\"{o}rmander
vector fields and that the coefficients $a_{ij}\in C_{X}^{0,\alpha}$ for some
$\alpha\in(0,1).$ Then for every domain $\Omega^{\prime}\subset\subset\Omega$
there exists a constant $c,\ $depending on $\Omega^{\prime}$, $\Omega$, $X$,
$\alpha$, $\lambda$, $||a_{ij}||_{C_{X}^{0,\alpha}(\Omega)}$, $||b_{i}%
||_{C_{X}^{0,\alpha}(\Omega)}$ and $||a_{0}||_{C_{X}^{0,\alpha}(\Omega)}$ such
that for every $u\in\mathcal{S}_{X}^{p}(\Omega)$ one has%
\[
||u||_{\mathcal{S}_{X}^{p}(\Omega^{\prime})}\leq c\left\{  ||\mathcal{H}%
u||_{L^{p}(\Omega)}+||u||_{L^{\infty}(\Omega)}\right\}  .
\]

\end{theorem}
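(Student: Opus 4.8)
The plan is to reduce the estimate to a Calder\'{o}n--Zygmund bound for a model constant-coefficient operator and then run the classical freezing-of-coefficients argument; throughout $1<p<\infty$, and by the reduction in Section \ref{assumptions} we may take $a_{0}\equiv1$. Fix $x_{0}\in\Omega$ and set $\mathcal{H}_{x_{0}}=\sum_{i,j=1}^{q}a_{ij}(x_{0})X_{i}X_{j}-X_{0}$. When $X=\{X_{0},X_{1},\ldots,X_{q}\}$ generates a homogeneous group, which is the situation of interest, the $X_{i}$ are left invariant, $X_{0}$ is $D_{\lambda}$-homogeneous of degree two, so $\mathcal{H}_{x_{0}}$ is left invariant, $D_{\lambda}$-homogeneous of degree two, and hypoelliptic by \eqref{1.1x}; for general H\"{o}rmander fields one first lifts and approximates via Rothschild--Stein to reduce to this setting. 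Hence $\mathcal{H}_{x_{0}}$ has a fundamental solution $\Gamma_{x_{0}}$, smooth off the origin and homogeneous of degree $2-Q$, so that $X_{i}X_{j}\Gamma_{x_{0}}$ and $X_{0}\Gamma_{x_{0}}$ are kernels, smooth away from the origin and homogeneous of degree $-Q$. By the theory of singular integrals on homogeneous groups (Folland--Stein) -- the required mean-value cancellation of these kernels following from the defining equation of $\Gamma_{x_{0}}$ -- the convolution operators $v\mapsto X_{i}X_{j}(\Gamma_{x_{0}}\ast v)$ and $v\mapsto X_{0}(\Gamma_{x_{0}}\ast v)$ are bounded on $L^{p}(\mathbf{R}^{n})$. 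Combined with the representation $v=\Gamma_{x_{0}}\ast(\mathcal{H}_{x_{0}}v)$ (up to sign) for compactly supported $v$, this yields $\|v\|_{\mathcal{S}_{X}^{p}}\leq c\,\|\mathcal{H}_{x_{0}}v\|_{L^{p}}$, and the uniform bounds \eqref{1.2} make $c$ independent of $x_{0}$.

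I would then localize and freeze. Given $u\in\mathcal{S}_{X}^{p}(\Omega)$ and a ball $B(x_{0},2r)\subset\Omega$, take by Lemma \ref{cutoff2} a cutoff with $B(x_{0},r)\prec\phi\prec B(x_{0},2r)$; by Lemma \ref{cutoff}, $u\phi$ lies in the space $\mathcal{S}_{0}^{p}(\Omega)$, hence (extended by zero) is a compactly supported element of $\mathcal{S}_{X}^{p}(\mathbf{R}^{n})$. Applying the model estimate to $v=u\phi$ and writing
\[
\mathcal{H}_{x_{0}}(u\phi)=\phi\,\mathcal{H}u+\sum_{i,j=1}^{q}\big(a_{ij}(x_{0})-a_{ij}(x)\big)X_{i}X_{j}(u\phi)-\sum_{i=1}^{q}b_{i}X_{i}(u\phi)+[\mathcal{H},\phi]u,
\]
where the commutator $[\mathcal{H},\phi]u$ carries at most one derivative of $u$ times derivatives of $\phi$, one uses that $|a_{ij}(x_{0})-a_{ij}(x)|\leq|a_{ij}|_{C_{X}^{0,\alpha}(\Omega)}(2r)^{\alpha}$ on $B(x_{0},2r)$: for $r$ small, depending only on $\lambda$ and $\|a_{ij}\|_{C_{X}^{0,\alpha}(\Omega)}$, the second-order oscillation term is absorbed into $\|u\phi\|_{\mathcal{S}_{X}^{p}}$ on the left. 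The first-order contributions and $[\mathcal{H},\phi]u$ are handled by the interpolation inequality $\|X_{i}w\|_{L^{p}}\leq\varepsilon\|w\|_{\mathcal{S}_{X}^{p}}+C_{\varepsilon}\|w\|_{L^{p}}$ on the group, together with a standard absorption over a nested family of cutoffs to control the scale $r$; after absorption one is left with $\|u\|_{\mathcal{S}_{X}^{p}(B(x_{0},r))}\leq c\big(\|\mathcal{H}u\|_{L^{p}(B(x_{0},2r))}+\|u\|_{L^{p}(B(x_{0},2r))}\big)$.

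Finally I would globalize by a covering argument: cover $\Omega^{\prime}\subset\subset\Omega$ by finitely many balls $B(x_{k},r)$ with $B(x_{k},2r)\subset\Omega$ and $r$ fixed small enough that the absorption is valid on each; summing and using $\|u\|_{L^{p}(\Omega)}\leq|\Omega|^{1/p}\|u\|_{L^{\infty}(\Omega)}$ gives the stated inequality with constant depending only on the listed data. The drift $X_{0}$ needs nothing special beyond being part of the principal part of $\mathcal{H}_{x_{0}}$, since it shares the degree-two scaling of the $X_{i}X_{j}$, so that only $a_{0}\in L^{\infty}$ with $a_{0}\geq\lambda^{-1}$ is used and normalizing $a_{0}\equiv1$ is harmless.

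The main obstacle is the first step: the uniform $L^{p}$ boundedness of the second-order and drift singular integrals $X_{i}X_{j}(\Gamma_{x_{0}}\ast\cdot)$ and $X_{0}(\Gamma_{x_{0}}\ast\cdot)$. This rests on (i) the existence, homogeneity and off-diagonal smoothness of the fundamental solution of the model H\"{o}rmander operator with drift -- for general vector fields obtained via the Rothschild--Stein lifting and a parametrix construction -- and (ii) the Folland--Stein theorem for convolution operators on homogeneous groups with kernels homogeneous of degree $-Q$ and vanishing average on the unit sphere. Both are by now classical (Folland; Rothschild--Stein; and, for the drift term, Bramanti--Brandolini and the references in \cite{BZ11}), which is exactly why the result is quoted from \cite[Theorem 2.2]{BZ11} rather than reproved here.
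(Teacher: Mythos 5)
The paper does not actually prove this theorem: it is imported from \cite[Theorem 2.2]{BZ11}, with the remark that the lower-order terms $b_{i}X_{i}$ are incorporated by arguing as in Section 5.5 of \cite{FGN12}. Your sketch is essentially a faithful outline of the freezing/Calder\'on--Zygmund argument behind that citation (frozen homogeneous model operator, Folland's homogeneous fundamental solution, Folland--Stein singular integrals with the drift kernel, cutoff--absorption--covering), and you correctly defer the genuinely hard inputs -- uniform $L^{p}$ bounds for the nonsymmetric drift singular integrals and the Rothschild--Stein reduction for general H\"ormander fields -- to the same sources, so the approach matches; the only caveat worth recording is that the argument requires $1<p<\infty$, the case $p=1$ appearing in Theorem \ref{obstacle} then being recovered from H\"older's inequality on the bounded domain since the right-hand side involves $||u||_{L^{\infty}(\Omega)}$.
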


Also here, we can generalize the results in \cite{BZ11} to hold for $b_{i}\in
C_{X}^{0,\alpha}$, this time arguing as in Section 5.5 in \cite{FGN12}.

\section{Proof of Theorem \ref{obstacle}\label{SecProof}}

To prove Theorem \ref{obstacle} we will, as outlined in the introduction, use
the classical penalization technique and we let $(\beta_{\varepsilon
})_{\varepsilon\in(0,1)}$ be a family of smooth functions satisfying
\eqref{beta1} and \eqref{beta2}. For $\delta\in(0,1)$ we let $\mathcal{H}%
^{\delta}$ denote the operator obtained from $\mathcal{H}$ by regularization
of the coefficients $a_{ij},\ b_{i},\ i,j=1,...,q,$ using a smooth mollifier,
\[
\mathcal{H}^{\delta}=\sum_{i,j=1}^{q}a_{ij}^{\delta}(x)X_{i}X_{j}+\sum
_{i=1}^{q}b_{i}^{\delta}(x)X_{i}-X_{0},\ \ x\in%
\mathbb{R}
^{n}.
\]
We also regularize $\varphi$, $\gamma$ and $f$ and denote the regularizations
$\varphi^{\delta}$, $\gamma^{\delta}$ and $f^{\delta}$ respectively.
Especially, we are able to extend these functions by continuity to a
neighborhood of $\Omega$. As stated in the introduction, see the discussion
above \eqref{dobs}, we assume that $\varphi$ is Lipschitz continuous on
$\overline{\Omega}$ and we denote its Lipschitz norm on $\overline{\Omega}%
\ $by $\mu.$ Then, since $g\geq\varphi$ on $\partial\Omega$ we see that
\[
g^{\delta}:=g+\mu\delta\geq\varphi^{\delta}\mbox{ on }\partial\Omega.
\]
Note that since $g$ is continuous, $g^{\delta}$ is also continuous and can
thus be used as boundary value function. As a first step we consider the
penalized problem%
\begin{equation}
\left\{
\begin{array}
[c]{ll}%
\mathcal{H}^{\delta}u+\gamma^{\delta}u=f^{\delta}+\beta_{\varepsilon
}(u-\varphi^{\delta})\ \ \ \ \  & \text{in }\Omega,\\
u=g^{\delta} & \text{on }\partial\Omega
\end{array}
\right.  \label{penalized}%
\end{equation}
and we prove that there exists a classical solution to this problem. This is
achieved in two steps, the first being:

\begin{theorem}
\label{thm2} Assume that $\mathcal{H}$ satisfies (\ref{1.1x}), (\ref{1.2}) and
(\ref{1.2+}), let $\Omega\subset\mathbf{R}^{n}$ be a bounded domain. Assume
that at every point $x_{0}\in\partial\Omega$ there exists an exterior normal
satisfying condition \eqref{fafa} in Theorem \ref{Bony}. Let $g\in
C(\partial\Omega)$ and let $h=h(x,u)$ be a smooth Lipschitz continuous
function, in the standard Euclidean sense, on $\overline{\Omega}. $ Then there
exists a classical solution $u\in C^{2,\alpha}(\Omega)\cap C(\overline{\Omega
})$ to the problem%
\[
\left\{
\begin{array}
[c]{ll}%
\mathcal{H}^{\delta}u=h(\cdot,u)\ \ \ \  & \text{in }\Omega,\\
u=g & \text{on }\partial\Omega.
\end{array}
\right.
\]
Furthermore, there exists a positive constant $c$, only depending on $h$ and
$\Omega$, such that%
\begin{equation}
\sup_{\Omega}|u|\leq c\left(  1+||g||_{L^{\infty}(\partial\Omega)}\right)  .
\label{hha}%
\end{equation}

\end{theorem}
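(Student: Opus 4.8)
The approach is to build $u$ as the monotone limit of a sequence of solutions of linear Dirichlet problems, each solvable by Bony's theorem (Theorem \ref{Bony}), and then to pass to the limit and obtain the regularity of $u$ by combining the a priori bound with the interior $\mathcal{S}^{p}$ estimate, the embedding theorem and the interior Schauder estimate. The first point is to put $\mathcal{H}^{\delta}$ into the form required by Theorem \ref{Bony}: the mollified matrix $A^{\delta}=\{a_{ij}^{\delta}\}$ is smooth, symmetric and, by \eqref{1.2}, uniformly positive definite on $\mathbf{R}^{q}$, hence admits a smooth symmetric positive square root $\sigma^{\delta}$; with $Y_{k}:=\sum_{j=1}^{q}\sigma_{kj}^{\delta}X_{j}$, $k=1,\dots,q$, one has $\sum_{i,j}a_{ij}^{\delta}X_{i}X_{j}=\sum_{k}Y_{k}^{2}$ modulo a first order operator in $X_{1},\dots,X_{q}$, and absorbing this remainder together with $\sum_{i}b_{i}^{\delta}X_{i}-X_{0}$ into a single field $Y_{0}$ gives $\mathcal{H}^{\delta}=\sum_{k=1}^{q}Y_{k}^{2}+Y_{0}$. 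Since $\sigma^{\delta}$ is invertible, the $X_{j}$ ($j\ge1$) are smooth combinations of the $Y_{k}$ and $X_{0}$ is recovered from $Y_{0}$ modulo $Y_{1},\dots,Y_{q}$, so $\{Y_{0},\dots,Y_{q}\}$ still satisfies H\"ormander's condition; the same holds after adding any strictly negative smooth zeroth order term $-K$, $K>0$. As the boundary hypothesis on $\Omega$ is exactly \eqref{fafa}, Theorem \ref{Bony} applies to $\mathcal{H}^{\delta}-K$ on $\Omega$ and provides, for continuous data, unique solutions in $C(\overline{\Omega})$, interior smoothness for smooth right-hand sides, and a comparison principle.

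Next I would establish the a priori bound \eqref{hha}. Choosing $K$ large enough that $G(x,s):=h(x,s)-Ks$ is decreasing in $s$ and writing $\mathcal{L}:=\mathcal{H}^{\delta}-K$, the equation $\mathcal{H}^{\delta}u=h(\cdot,u)$ becomes $\mathcal{L}u=G(\cdot,u)$. Since $\mathcal{L}$ applied to a constant $c$ equals $-Kc$, and using the structure of $h$ (in the intended application $h(x,u)=f^{\delta}+\beta_{\varepsilon}(u-\varphi^{\delta})-\gamma^{\delta}u$ with $\gamma^{\delta}\le\gamma_{0}<0$ and $\beta_{\varepsilon}\le\varepsilon$, so that large positive constants are supersolutions and large negative ones subsolutions), one checks that $\overline{u}:=c_{0}(1+\|g\|_{L^{\infty}(\partial\Omega)})$ satisfies $\mathcal{L}\overline{u}\le G(\cdot,\overline{u})$ and $\overline{u}\ge g$ on $\partial\Omega$ for a suitable $c_{0}=c_{0}(h,\Omega)$, and $\underline{u}:=-\overline{u}$ satisfies the reverse inequalities. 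Applying the comparison principle of Theorem \ref{Bony} to $\overline{u}-u$ and to $u-\underline{u}$ for any solution $u$ gives $|u|\le\overline{u}$ on $\Omega$, which is \eqref{hha}; the same comparison applies uniformly to all iterates constructed below.

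Now the monotone iteration: set $u_{0}:=\overline{u}$ and let $u_{j+1}$ be the solution, furnished by Theorem \ref{Bony}, of $\mathcal{L}u_{j+1}=G(\cdot,u_{j})$ in $\Omega$, $u_{j+1}=g$ on $\partial\Omega$. This gives $u_{j+1}\in C(\overline{\Omega})$, and since $u_{j}$ is H\"older continuous with respect to $d$ (verified below) and $h$ is Euclidean Lipschitz, $G(\cdot,u_{j})\in C_{X,loc}^{0,\alpha}(\Omega)$, so approximating $G(\cdot,u_{j})$ by smooth data, solving, and using uniqueness upgrades $u_{j+1}$ to $C_{X,loc}^{2,\alpha}(\Omega)\cap C(\overline{\Omega})$. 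Monotonicity follows from Theorem \ref{Bony}: $\mathcal{L}(u_{0}-u_{1})=\mathcal{L}\overline{u}-G(\cdot,\overline{u})\le0$ with nonnegative boundary values forces $u_{1}\le u_{0}$, and inductively $\mathcal{L}(u_{j}-u_{j+1})=G(\cdot,u_{j-1})-G(\cdot,u_{j})\le0$ (as $G$ is decreasing and $u_{j-1}\ge u_{j}$) with zero boundary values forces $u_{j+1}\le u_{j}$; comparison with $\underline{u}$ gives $u_{j}\ge\underline{u}$. Hence $u_{j}\downarrow u$ pointwise, with $\sup_{\Omega}|u_{j}|\le c_{0}(1+\|g\|_{L^{\infty}})$ uniformly in $j$.

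Finally I would pass to the limit. Since $G(\cdot,u_{j-1})$ is uniformly bounded, Theorem \ref{a priori} gives, for a fixed $p\in(Q/2,Q)$ and every $\Omega^{\prime}\subset\subset\Omega$, a bound $\|u_{j}\|_{\mathcal{S}^{p}(\Omega^{\prime})}\le c$ uniform in $j$; dominated convergence gives $u_{j}\to u$ in $L^{p}_{loc}$, hence $u_{j}\rightharpoonup u$ weakly in $\mathcal{S}^{p}_{loc}(\Omega)$, so $\mathcal{H}^{\delta}u_{j}\rightharpoonup\mathcal{H}^{\delta}u$ in $L^{p}_{loc}$ while $G(\cdot,u_{j-1})\to G(\cdot,u)$ in $L^{p}_{loc}$; passing to the limit yields $\mathcal{H}^{\delta}u=h(\cdot,u)$ a.e.\ in $\Omega$. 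By the embedding theorem (Theorem \ref{Embedding}) $u\in C_{X,loc}^{1,\alpha}(\Omega)$ — which in particular secures, a posteriori, the uniform local H\"older bound on the $u_{j}$ invoked above — so $h(\cdot,u)\in C_{X,loc}^{0,\alpha}(\Omega)$, and the interior Schauder theory behind Theorem \ref{Schauder} gives $u\in C_{X,loc}^{2,\alpha}(\Omega)$, so the equation holds pointwise. That $u\in C(\overline{\Omega})$ with $u=g$ on $\partial\Omega$ follows because the $u_{j}$ all attain $g$ continuously and the barriers underlying the solvability part of Theorem \ref{Bony} furnish a modulus of continuity at $\partial\Omega$ uniform in $j$; the estimate \eqref{hha} is inherited from the uniform bound on the $u_{j}$. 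The main obstacle is to arrange the scheme so that it is not circular — the H\"older regularity of the iterates needed to run the iteration is produced only at the end, via the $\mathcal{S}^{p}$ estimate and the embedding theorem — and, concretely, to pin down the a priori bound \eqref{hha} and the continuous attainment of the boundary data for the degenerate drift operator $\mathcal{H}^{\delta}$, which is where the boundedness of $\Omega$ and the sign conditions on $\gamma$ and $\beta_{\varepsilon}$ are used in an essential way.
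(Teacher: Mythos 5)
Your overall skeleton (monotone iteration solved step by step with Bony's theorem, the comparison principle for monotonicity and sup bounds, uniform-in-$j$ barriers for continuity up to $\partial\Omega$) is the same as the paper's, but two steps have genuine gaps.

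First, your derivation of the sub/supersolution property of the constants $\pm c_{0}(1+\|g\|_{L^{\infty}})$, and hence of \eqref{hha}, explicitly invokes ``the structure of $h$ in the intended application'' ($\gamma^{\delta}\le\gamma_{0}<0$, $\beta_{\varepsilon}\le\varepsilon$). Those sign conditions are not hypotheses of Theorem \ref{thm2}: the theorem is stated for an arbitrary smooth, Euclidean-Lipschitz $h$, with the constant in \eqref{hha} allowed to depend on $h$. The paper's argument uses only the linear growth $|h(x,u)|\le\mu(1+|u|)$ coming from the Lipschitz bound, choosing the constant $c$ in $u_{0}=c(1+\|g\|_{L^{\infty}(\partial\Omega)})-1$ and the shift $-\mu u$ in the iteration so that the first comparison and the two-sided bound $-u_{0}\le u_{j}\le u_{0}$ go through for every such $h$. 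As written, your argument establishes a special case of the statement rather than the statement itself.

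Second, the regularity scheme. You acknowledge the circularity yourself: you need Hölder continuity of $u_{j}$ to make $G(\cdot,u_{j})\in C_{X,loc}^{0,\alpha}$ and run the iteration, yet you only recover Hölder bounds at the very end via the $\mathcal{S}^{p}$ estimate and Theorem \ref{Embedding}. This detour is unnecessary and, as set up, does not close. Since $h$ is smooth and $u_{0}$ is a constant, the right-hand side of each linear problem is smooth in $\Omega$ by induction, so the hypoellipticity part of Theorem \ref{Bony} ($f\in C^{\infty}(D)$ implies $u\in C^{\infty}(D)$) gives $u_{j}\in C^{\infty}(\Omega)$ for every $j$; this is what the paper uses. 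Relatedly, your final upgrade of the weak $\mathcal{S}_{loc}^{p}$ limit to a classical solution rests on ``the interior Schauder theory behind Theorem \ref{Schauder}'', but Theorem \ref{Schauder} is an a priori estimate for functions already in $C_{X}^{2,\alpha}$, not a regularity theorem for $\mathcal{S}^{p}$ functions satisfying the equation almost everywhere; nothing in the paper supplies that implication. The paper instead applies the Schauder estimate to the smooth iterates $u_{j}$, obtains $C_{X}^{2,\alpha}$ bounds on compact subsets uniform in $j$ (the Hölder norm of $h(\cdot,u_{j-1})-\mu u_{j-1}$ being controlled inductively together with \eqref{hha}), and extracts a subsequence converging in $C_{X,loc}^{2,\alpha}$, so the limit is classical without any appeal to Theorem \ref{Embedding} or to the $\mathcal{S}^{p}$ theory at this stage. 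If you repair these two points --- use only the Lipschitz bound on $h$ for the initial comparison and the sup bound, and use Bony's hypoellipticity plus the Schauder a priori estimate on the iterates --- your argument coincides with the paper's.
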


\begin{proof}
To prove Theorem \ref{thm2} we will use the same technique as in the proof of
Theorem 3.2 in \cite{FPP}, i.e., a monotone iterative method. To start the
proof we note that, since $h=h(x,u)$ is a Lipschitz continuous function in the
standard Euclidean sense, there exists a constant $\mu$ such that
$|h(x,u)|\leq\mu(1+|u|)$ for $x\in\overline{\Omega}$. We let%
\begin{equation}
\label{init}u_{0}(x)=c(1+||g||_{L^{\infty}(\partial\Omega)})-1,
\end{equation}
for some constant $c$ to be chosen later, and we recursively define, for
$j=1,2,...,$%
\begin{equation}
\left\{
\begin{array}
[c]{ll}%
\mathcal{H}^{\delta}u_{j}-\mu u_{j}=h(\cdot,u_{j-1})-\mu u_{j-1}\ \ \ \ \  &
\text{in }\Omega,\\
u_{j}=g & \text{on }\partial\Omega.
\end{array}
\right. \label{monotone pde}%
\end{equation}
The linear Dirichlet problem in (\ref{monotone pde}) has been studied by Bony
in \cite{B69} and since the coefficients of the operator $\mathcal{H}^{\delta
}$ are smooth in a neighborhood of $\Omega$ it follows that $\mathcal{H}%
^{\delta}$ can be rewritten as a H\"{o}rmander operator in line with Theorem
\ref{Bony}. Hence, using Theorem \ref{Bony} we can conclude that a classical
solution $u_{j}\in C^{\infty}(\Omega)$ exists. In particular $u_{j}\in
C(\overline{\Omega})$ and combining Theorem \ref{Bony} with
(\ref{Carnot-Euclid})\ it follows that $u_{j}\in C_{loc}^{2,\alpha}(\Omega).$
We prove, by induction, that $\{u_{j}\}_{j=1}^{\infty}$ is a decreasing
sequence. By definition $u_{1}<u_{0}$ on $\partial\Omega$ and we can choose
the constant $c$ appearing in the definition of $u_{0}$, depending on $h$, so
that
\[
\mathcal{H}^{\delta}(u_{1}-u_{0})-\mu(u_{1}-u_{0})=h(\cdot,u_{0}%
)-\mathcal{H}^{\delta}u_{0}=h(\cdot,u_{0})+c(1+u_{0})\geq0
\]
holds. Thus, by the maximum principle, stated at the end of Theorem
\ref{Bony}, we conclude that $u_{1}<u_{0}$ on $\overline{\Omega}.$ Assume, for
fixed $j\in%
\mathbb{N}
$, that $u_{j}<u_{j-1}.$ Then by the inductive hypothesis we see that
\begin{align*}
\mathcal{H}^{\delta}(u_{j+1}-u_{j})-\mu(u_{j+1}-u_{j})  & =h(\cdot
,u_{j})-h(\cdot,u_{j-1})-\mu(u_{j}-u_{j-1})\\
& =h(\cdot,u_{j})-h(\cdot,u_{j-1})+\mu|u_{j}-u_{j-1}|\geq0.
\end{align*}
Hence, by the maximum principle $u_{j+1}<u_{j}$ which proves that
$\{u_{j}\}_{j=1}^{\infty}$ is a decreasing sequence. By repeating this
calculation for $u_{j}+u_{0}$, we get the following bounds
\begin{equation}
-u_{0}\leq u_{j+1}\leq u_{j}\leq u_{0}.\label{uj-bound}%
\end{equation}
As $u_{j}\in C_{loc}^{2,\alpha}(\Omega)\cap C(\overline{\Omega})$ we can now
use Theorem \ref{Schauder} to conclude that
\begin{align}
||u_{j}||_{C^{2,\alpha}(U)}  & \leq c\left(  \sup_{\Omega}|u_{j}%
|+||\mathcal{H}^{\delta}u_{j}||_{C^{0,\alpha}(\Omega)}\right) \nonumber\\
& \leq c\left(  u_{0}+||h(\cdot,u_{j-1})||_{C^{0,\alpha}(\Omega)}+||\mu
(u_{j}-u_{j-1})||_{C^{0,\alpha}(\Omega)}\right) ,\label{uj-c2alfa}%
\end{align}
whenever $U$ is a compact subset of $\Omega$. Thus $||u_{j}||_{C^{2,\alpha
}(U)}$ is clearly bounded by some constant $c$ independent of $j$ due to
(\ref{uj-bound})-(\ref{uj-c2alfa}) and the fact that $h$ is Lipschitz. Thus
$\{u_{j}\}_{j=1}^{\infty}$ has a convergent subsequence in $C_{loc}^{2,\alpha
}(\Omega)$ and in the following we will denote the convergent subsequence
$\{u_{j}\}_{j=1}^{\infty}$. As $j\rightarrow\infty$ in (\ref{monotone pde}) we
have that%
\[
\left\{
\begin{array}
[c]{ll}%
\mathcal{H}^{\delta}u=h(\cdot,u)\ \ \ \ \  & \text{in }\Omega,\\
u=g & \text{on }\partial\Omega.
\end{array}
\right.
\]
We next prove that $u\in C(\overline{\Omega})$ by a barrier argument. For
fixed $\varsigma\in\partial\Omega$ and $\varepsilon>0,$ let $V$ be an open
neighborhood of $\varsigma$ such that%
\[
|g(x)-g(\varsigma)|\leq\varepsilon\mbox{ whenever }x\in V\cap\partial\Omega.
\]
Let $w:V\cap\overline{\Omega}\rightarrow%
\mathbb{R}
$ be a function with the following properties:
\begin{align}
(i) & \mbox{$\mathcal{H}^{\delta }w\leq-1$ in $V\cap \Omega$},\nonumber\\
(ii) &
\mbox{$w>0$ in $V\cap \overline{\Omega}\backslash \{\varsigma\}$ and $w(\varsigma)=0.$}\nonumber
\end{align}
That such a function $w$ exists follows from the assumption that there exists
an exterior normal for all points on $\partial\Omega$ , see Definition
\ref{eter} and Remark \ref{remark} below. We define
\[
v^{\pm}(x)=g(\varsigma)\pm(\varepsilon+kw(x))\mbox{ whenever }x\in
V\cap\partial\Omega
\]
for some constant $k>0$ large enough to ensure that%
\[
\mathcal{H}^{\delta}(u_{j}-v^{+})\geq h(\cdot,u_{j-1})-\mu(u_{j-1}%
-u_{j})+k\geq0
\]
and that $u_{j}\leq v^{+}$ on $\partial(V\cap\Omega).$ Thus, the maximum
principle asserts that $u_{j}\leq v^{+}$ on $V\cap\Omega$ and likewise
$u_{j}\geq v^{-}$ on $V\cap\Omega.$ Note that $k$ can be chosen to depend on
the Lipschitz constant of $h$, $\mu$ and $u_{0}$ only and, in particular, $k$
can be chosen independent of $j.$ Passing to the limit we see that
\[
g(\varsigma)-\varepsilon-kw(x)\leq u(x)\leq g(\varsigma)+\varepsilon
+kw(x),\ \ \ \ x\in V\cap\Omega,
\]
and hence
\[
g(\varsigma)-\varepsilon\leq\underset{x\rightarrow\varsigma}{\lim\inf
}\ u(x)\leq\underset{x\rightarrow\varsigma}{\lim\sup}\ u(x)\leq g(\varsigma
)+\varepsilon
\]
where the limit $x\rightarrow\varsigma$ is taken through $x\in V\cap\Omega$.
Since $\varepsilon$ can be chosen arbitrarily we can conclude that $u\in
C(\overline{\Omega}).$ Finally, \eqref{hha} follows from an application of the
maximum principle.
\end{proof}

\begin{remark}
\label{remark} In the proof above we used barrier functions, plainly stating
that proper barrier functions exists. To see that this is actually the case,
let $\varsigma\in\partial\Omega$, then using our assumption on the domain
$\Omega$, see\ also Definition \ref{eter}, we see that there exists a standard
Euclidean ball in $\mathbf{R}^{n},$ $B_{E}(x_{0},\rho),$ with center $x_{0}%
\in\tilde{\Omega}\backslash\Omega$ and radius $\rho$, such that $B_{E}%
(x_{0},\rho)\subset\tilde{\Omega}$ and $\overline{B_{E}(x_{0},\rho)}%
\cap\overline{\Omega}=\{\varsigma\}.$ Using $x_{0}$ we define, for $K\gg1$,
\[
w(x)=e^{-K|\varsigma-x_{0}|^{2}}-e^{-K|x-x_{0}|^{2}}.
\]
Then, $w(\varsigma)=0$ and $w(x,t)>0$ for $x\in V\cap\overline{\Omega
}\backslash\{\varsigma\}.$ To see that $\mathcal{H}^{\delta}w\leq-1,$ we note
that since the coefficients of the operator $\mathcal{H}^{\delta}$ are smooth
in a neighborhood of $V\cap\overline{\Omega}$, $\mathcal{H}^{\delta}$ can be
rewritten as a H\"{o}rmander operator in line with Theorem \ref{Bony}. In
particular, using the notation of Theorem \ref{Bony} we have
\begin{align*}
\mathcal{H}^{\delta}w(x)  &  =-e^{-K|x-x_{0}|^{2}}\left(  4K^{2}\sum
_{i,j=1}^{n}a_{ij}^{\ast}(x)(x^{i}-x_{0}^{i})(x^{j}-x_{0}^{j})\right. \\
&  \ \ \ \ \ \ \ \ \ \ \ \ \ \ \ \ \ \ \ \ \ \ \ \ \ \ \ \ \left.
-2K\sum_{i=1}^{n}\left(  a_{ii}^{\ast}(x)+a_{i}^{\ast}(x)(x^{i}-x_{0}%
^{i})\right)  +\gamma(x)w(x)\right)  ,
\end{align*}
where $a_{ij}^{\ast},a_{i}^{\ast}$ and $\gamma$ denote the coefficients of the
H\"{o}rmander operator $\mathcal{H}^{\delta}$\ as stated in Theorem
\ref{Bony}. Hence, for $V$ small and choosing $K$ large enough, $\mathcal{H}%
^{\delta}w(x)\leq-1$ on $V\cap\overline{\Omega}.$ Thus, $w$ is indeed a proper
barrier function.
\end{remark}

\noindent\textbf{Proof of Theorem \ref{obstacle}.} We first note, using
Theorem \ref{thm2}, that the problem in (\ref{penalized}) has a classical
solution $u_{\varepsilon,\delta}\in C^{2,\alpha}(\Omega)\cap C(\overline
{\Omega})$. The assumption $\gamma<0$ enable us to use the maximum principle.
To proceed we first prove that
\begin{equation}
|\beta_{\varepsilon}(u_{\varepsilon,\delta}-\varphi^{\delta})|\leq c
\label{step1}%
\end{equation}
for some constant $c$ independent of $\varepsilon$ and $\delta.$ By definition
$\beta_{\varepsilon}\leq\varepsilon$ and hence we only need to prove the
estimate from below. Since $\beta_{\varepsilon}(u_{\varepsilon,\delta}%
-\varphi^{\delta})\in C(\overline{\Omega})$ this function achieves a minimum
at a point $(\varsigma,\tau)\in\overline{\Omega}.$ Assume that $\beta
_{\varepsilon}(u_{\varepsilon,\delta}(\varsigma)-\varphi^{\delta}%
(\varsigma,))\leq0,$ otherwise we are done. If $\varsigma\in\partial\Omega$,
then, since $g\geq\varphi$
\[
\beta_{\varepsilon}(u_{\varepsilon,\delta}(\varsigma)-\varphi^{\delta
}(\varsigma))=\beta_{\varepsilon}(g^{\delta}(\varsigma)-\varphi^{\delta
}(\varsigma))\geq0.
\]
On the other hand, if $\varsigma\in\Omega$, then the function $u_{\varepsilon
,\delta}-\varphi^{\delta}$ also reaches its (negative) minimum at $\varsigma$
since $\beta_{\varepsilon}$ is increasing. Now, due to the maximum principle,%
\begin{equation}
\mathcal{H}^{\delta}u_{\varepsilon,\delta}(\varsigma)-\mathcal{H}^{\delta
}\varphi^{\delta}(\varsigma)\geq0\geq-\gamma^{\delta}(\varsigma
)(u_{\varepsilon,\delta}(\varsigma)-\varphi^{\delta}(\varsigma)). \label{lse}%
\end{equation}
Because of (\ref{dobs}) and the assumption that $a_{0},b_{i}\in L^{\infty
}(\Omega)$ we conclude that $\mathcal{H}^{\delta}\varphi^{\delta}\geq\eta$ for
some constant $\eta$ independent of $\delta.$ Now, since $\gamma,\ f\in
L^{\infty}(\Omega)$ and using (\ref{lse}), we obtain
\begin{align*}
\beta_{\varepsilon}(u_{\varepsilon,\delta}-\varphi^{\delta})  &
=\mathcal{H}^{\delta}u_{\varepsilon,\delta}(\varsigma)+\gamma^{\delta
}(\varsigma)u_{\varepsilon,\delta}(\varsigma)-f^{\delta}(\varsigma)\\
&  \geq\mathcal{H}^{\delta}\varphi^{\delta}(\varsigma)+\gamma^{\delta
}(\varsigma)\varphi^{\delta}(\varsigma)-f^{\delta}(\varsigma)\geq c,
\end{align*}
for some constant $c$ independent of $\varepsilon$ and $\delta$ and hence
(\ref{step1}) holds. We next use (\ref{step1}) to prove that $u_{\varepsilon
,\delta}\rightarrow u$ for some function $u\in C^{2,\alpha}(\Omega)\cap
C(\overline{\Omega})$ and that $u$ is a solution to the obstacle problem
(\ref{e-obs}). To do this we first prove that there exist constants $c_{1}$
and $c_{2}$ such that%
\begin{equation}
||u_{\varepsilon,\delta}||_{L^{\infty}(\Omega)}\leq c_{2}\left(
||g||_{L^{\infty}(\Omega)}+||f||_{L^{\infty}(\Omega)}+c_{1}\right)  .
\label{u sup}%
\end{equation}
In fact, this follows by considering solutions to
\[
\left\{
\begin{array}
[c]{ll}%
\mathcal{H}^{\delta}v_{\varepsilon,\delta}-||\gamma^{\delta}||_{L^{\infty
}(\Omega)}v_{\varepsilon,\delta}=-2(||f^{\delta}||_{L^{\infty}(\Omega
)}+||\beta_{\varepsilon}(u_{\varepsilon,\delta}-\varphi^{\delta}%
)||_{L^{\infty}(\Omega)})\ \ \ \ \  & \text{in }\Omega,\\
u=||g^{\delta}||_{L^{\infty}(\Omega)} & \text{on }\partial\Omega.
\end{array}
\right.
\]
Using the maximum principle on $v_{\varepsilon,\delta}-u_{\varepsilon,\delta
},$ we see that $u_{\varepsilon,\delta}<v_{\varepsilon,\delta}.$ Moreover,
since $||\beta_{\varepsilon}(u_{\varepsilon,\delta}-\varphi^{\delta
})||_{L^{\infty}(\Omega)}$ is bounded uniformly for $\varepsilon,\delta,$ and
since the $L^{\infty}$-norm of the regularized version of a function is
bounded by the $L^{\infty}$-norm of the function itself, (\ref{u sup})
follows.\textbf{ }Then we use (\ref{step1}) and (\ref{u sup}) together with
Theorem \ref{a priori} to conclude that for every $U\subset\subset\Omega$ and
$p\geq1$ the norm $||u_{\varepsilon,\delta}||_{\mathcal{S}^{p}(U)}$ is bounded
uniformly in $\varepsilon$ and $\delta.$ Consequently $\{u_{\epsilon,\delta
}\}$ converges weakly to a function $u$ on compact subsets of $\Omega$ as
$\varepsilon,\delta\rightarrow0$ in $\mathcal{S}^{p}$, and by Theorem
\ref{Embedding} in $C^{1,\alpha}$. Also, by construction,%
\[
\underset{\varepsilon,\delta\rightarrow0}{\lim\sup}\ \beta_{\varepsilon
}(u_{\varepsilon,\delta}-\varphi^{\delta})\leq0
\]
and therefore $\mathcal{H}u+\gamma\leq f$ a.e. in $\Omega.$ In the set
$\{u\geq\varphi\}\cap\Omega$ equality holds. Together with the estimate
(\ref{step1}) this shows that $\max\{\mathcal{H}u+\gamma u-f,\varphi-u\}=0$ on
$\Omega.$ Proceeding as in the end of the proof of Theorem \ref{thm2}, using
barrier functions, we conclude that $u\in C(\overline{\Omega})$ and $u=g$ on
$\partial\Omega$, hence $u$ is a strong solution to the obstacle problem
(\ref{e-obs}). The bound (\ref{Sp bound}) is a direct consequence of the above
calculations. Altogether, this completes the proof. \hfill$\Box$

\section{Proof of Theorem \ref{Embedding}\label{SecEm}}

The embedding theorem we aim to prove is not as general as we would have
hoped, and actually, when we began working on this paper we did believe that
the proof was already out there. Despite several attempts on finding a proper
reference we were unable to find one, and in the end, we decided to add the
assumption that we are working on a homogeneous group and that the vector
fields $X_{1},...,X_{q}$ are left invariant and homogeneous of degree one
while $X_{0}$ is left invariant and homogeneous of degree two. This enables us
to prove the necessary embedding, that is that the $C^{1,\alpha}$-norm of
solutions are bounded by the $\mathcal{S}^{p}$-norm. In the case of stratified
groups this was proved by Folland in \cite[Theorem 5.15]{F75}, and no
assumption on $u$ solving a particular equation had to be made. In the pure
subelliptic content, that is, when there is no lower order term, this has been
extensively investigated, see for instance Lu \cite[Theorem 1.1]{Lu96} and the
references therein. In the subelliptic parabolic case, that is, when
$X_{0}=\partial_{t},$ this was proved in \cite[Theorem 1.4]{FGN12}. The
approach used to the case $X_{0}=\partial_{t}$ cannot be applied to this case
since we lack enough information about the fundamental solution. Finally, a
slightly less general formulation of the embedding theorem was proved in
\cite[Theorem 7]{BB00b}, where the $C^{0,\alpha}$-norm is bounded by the
$\mathcal{S}^{p}$-norm.\textbf{\ }

\noindent\textbf{Proof of Theorem \ref{Embedding}. }First, we note that by
\cite[Theorem 4]{BB00b} we have, for $\alpha=2-Q/p,$%
\[
||u||_{C^{0,\alpha}(\Omega^{\prime})}\leq c\left(  ||\mathcal{H}%
u||_{L^{r}(\Omega)}+||u||_{L^{p}}\right)  ,
\]
for some $c$ depending only on $\mathbf{G}$, $\mu$, $p$, $s$, $\Omega$ and
$\Omega^{\prime}$\ (it is stated in a slightly different way, but restricted
to our choice of $p$ and $s$ this is what is actually proved). It remains to
show that the same holds when $u$ on the left hand side is replaced by
$X_{i}u$ for $i=1,...,q.$ Let $H=\sum_{i=1}^{q}X_{i}^{2}+X_{0}$ and let
$\Gamma$ be the corresponding fundamental solution. Such a fundamental
solution exists by a classical result of Folland \cite[Theorem 2.1]{F75}.
Moreover, $\Gamma$ is homogeneous of degree $2-Q.$ This means that, for $u\in
C_{0}^{\infty}(B_{R}),$ we can write%
\[
u=Hu\ast\Gamma.
\]
Let $\phi$ be a cutoff function with $B_{R/2}(x_{0})\prec\phi\prec B_{R}%
(x_{0})$, for some $x_{0}\in\Omega,R\in\mathbf{R}$ such that $B_{2R}(x_{0}%
)\in\Omega$. That such a cutoff function exists follows from Lemma
\ref{cutoff2}. By Lemma \ref{cutoff}, $u\phi\in\mathcal{S}_{0}^{p}(B_{R}).$
Since H\"{o}lder continuity is a local property, we can restrict ourselves to
balls, and by a density argument we can look at smooth functions $u$.
Therefore, assume that $u\in C_{0}^{\infty}(\Omega)$ and let $M$ be as in
Proposition \ref{MMM}, then%
\[
X_{i}u(x)=X_{i}\ \int_{\mathbf{R}^{n}}\Gamma(y^{-1}\circ x)Hu(y)dy.
\]
Since $u$ is smooth with compact support we may differentiate inside integral,
and we obtain%
\begin{align}
|X_{i}u(x)-X_{i}u(y)|  &  \leq\int_{\mathbf{R}^{n}}|X_{i}\Gamma(z^{-1}\circ
x)-X_{i}\Gamma(z^{-1}\circ y)|\ |Hu(z)|dz\nonumber\\
&  \leq\int_{||z^{-1}\circ x||\geq M||y^{-1}\circ x||}...dz+\int%
_{||z^{-1}\circ x||<M||y^{-1}\circ x||}...dz=I+II. \label{hund}%
\end{align}
Above, it is implicitly understood that the vector fields act on $\Gamma$ as a
function of $x$ respectively $y$ (hence, do not differentiate with respect to
the $z$-variable). Since $\Gamma$ is homogeneous of degree $2-Q$ and
$X_{i},\ i=1,...,q,$ is homogeneous of degree $1,$ $X_{i}\Gamma$ is
homogeneous of degree $1-Q.$ By Proposition \ref{MMM} we get%
\[
I\leq c(\mathbf{G},p)\ ||y^{-1}\circ x||\ \int_{||z^{-1}\circ x||\geq
M||y^{-1}\circ x||}\frac{|Hu(z)|}{||z^{-1}\circ x||^{Q}}dz.
\]
Further, we introduce the sets%
\[
\sigma_{k}=\{z\in\mathbf{R}^{n}:2^{k}M||y^{-1}\circ x||\leq||z^{-1}\circ
x||\leq2^{k+1}M||y^{-1}\circ x||\},
\]
for $k=0,1,...,$ and note that the Euclidean volume of the set $\sigma_{k},$
by (\ref{bollar}), is equal to%
\begin{align}
&  |B(0,2^{k+1}M||y^{-1}\circ x||)|-|B(0,2^{k}M||y^{-1}\circ x||)|\nonumber\\
&  =|B(0,1)|\ \left(  \left(  2^{k+1}M||y^{-1}\circ x||\right)  ^{Q}-\left(
2^{k}M||y^{-1}\circ x||\right)  ^{Q}\right) \\
&  =|B(0,1)|\ (2^{Q}-1)\ 2^{Qk}M^{Q}||y^{-1}\circ x||^{Q}. \label{bolldiff}%
\end{align}
By assumption $u\in\mathcal{S}^{p}(\Omega)$ for some $p.$ Let $q$ be such that
$\frac{1}{p}+\frac{1}{q}=1.$ Then, we obtain%
\begin{align*}
I  &  \leq c(\mathbf{G},p)\ ||y^{-1}\circ x||\ \int_{||z^{-1}\circ x||\geq
M||y^{-1}\circ x||}\frac{|Hu(z)|}{||z^{-1}\circ x||^{Q}}dz\\
&  \leq c(\mathbf{G},p)\ ||y^{-1}\circ x||\ \sum_{k=0}^{\infty}\left(
2^{k}M||y^{-1}\circ x||\right)  ^{-Q}\int_{\sigma_{k}}|Hu(z)|dz\\
&  \leq c(\mathbf{G},p)\ ||y^{-1}\circ x||^{1-Q}\ \sum_{k=0}^{\infty}%
2^{-kQ}\left[  \left(  2^{Q}-1\right)  2^{kQ}M^{Q}||y^{-1}\circ x||^{Q}%
\right]  ^{1/q}\ ||Hu||_{L^{p}(\sigma_{k})}\\
&  \leq c(\mathbf{G},p)\ ||y^{-1}\circ x||^{1-Q+Q/q}\ ||Hu||_{L^{p}(\Omega
)}\sum_{k=0}^{\infty}2^{-k(Q-Q/q)}.
\end{align*}
This sum converges, and for $Q<p$ we have that the exponent of $||y^{-1}\circ
x||$ is larger than zero.

Next step is to look at $II$ in (\ref{hund}). In a similar way we define the
sets%
\[
\widetilde{\sigma}_{k}=\{z\in\mathbf{R}^{n}:2^{-(k+1)}M||y^{-1}\circ
x||\leq||z^{-1}\circ x||\leq2^{-k}M||y^{-1}\circ x||\},
\]
for $k=0,1,...,$ and in this case we get%
\[
II\leq\underset{II_{1}}{\underbrace{\int_{||z^{-1}\circ x||<M||y^{-1}\circ
x||}\frac{|Hu(z)|}{||z^{-1}\circ x||^{Q-1}}dz}}+\underset{II_{2}%
}{\underbrace{\int_{||z^{-1}\circ x||<M||y^{-1}\circ x||}\frac{|Hu(z)|}%
{||z^{-1}\circ y||^{Q-1}}dz}}.
\]
To begin with, we deal with the first term above, which by the compact support
of $u,$ is bounded by%
\begin{align*}
II_{1}  &  \leq c(\mathbf{G},p)\sum_{k=0}^{\infty}\ \int_{\widetilde{\sigma
}_{k}}\frac{|Hu(z)|}{\left(  2^{-(k+1)}M\ ||y^{-1}\circ x||\right)  ^{Q-1}%
}dz\\
&  \leq c(\mathbf{G},p)\ ||y^{-1}\circ x||^{-(Q-1)}\sum_{k=0}^{\infty
}2^{(k+1)(Q-1)}\left(  \int_{\widetilde{\sigma}_{k}}1dz\right)  ^{1/q}%
||Hu||_{L^{p}(\Omega)}\\
&  \leq c(\mathbf{G},p)\ ||y^{-1}\circ x||^{-(Q-1)}||Hu||_{L^{p}(\Omega)}%
\sum_{k=0}^{\infty}2^{(k+1)(Q-1)}\left[  \left(  2^{-k}M\ ||y^{-1}\circ
x||\right)  ^{Q}\right]  ^{1/q}\\
&  =c(\mathbf{G},p)\ ||y^{-1}\circ x||^{-(Q-1-Q/q)}||Hu||_{L^{p}(\Omega)}%
\sum_{k=0}^{\infty}2^{k(Q-1-Q/q)}.
\end{align*}
The sum converges for $Q<p,$ and in that case
\[
II_{1}\leq c(\mathbf{G},p)\ ||y^{-1}\circ x||^{(p-Q)/p}||Hu||_{L^{p}(\Omega
)}.
\]
To bound $II_{2}$, note that if $||z^{-1}\circ x||<M||y^{-1}\circ x||,$ then
$||z^{-1}\circ y||\leq c(||z^{-1}\circ x||+||y^{-1}\circ x||)\leq
c(1+M)||y^{-1}\circ x||$. This means that we can argue as for $II_{1}$, to
find that $II_{2}\leq c(\mathbf{G},p)\ ||y^{-1}\circ x||^{(p-Q)/p}%
||Hu||_{L^{p}(\Omega)}.$ Put together, we have shown that%
\[
|X_{i}u(x)-X_{i}u(y)|\leq c(\mathbf{G},p)\ ||y^{-1}\circ x||^{(p-Q)/p}%
||Hu||_{L^{p}(\Omega)}.
\]
That is, (\ref{badda}) hold for functions $u\in\mathcal{S}^{p}(\Omega)\cap
C_{0}^{\infty}(\Omega).$ The general case follows, as previously mentioned, by
using a density argument and cutoff functions. Note that, we proved this for
H\"{o}lder spaces defined by means of the distance $d_{h},$ however, this
carries over directly to our case. \hfill$\square$

\section{Homogeneous H\"{o}rmander operators\label{exempel}}

We will now give some examples as to when our results apply. The two first
examples shows operators for which our results overlap with the existing
literature, while the third and fourth example shows that our results covers
equations previously not considered for obstacle problems.

\begin{example}
(Subelliptic parabolic equations) When we replace $a_{0}X_{0}$ with
$\partial_{t}$ we get a subelliptic parabolic operator;%
\[
\mathcal{H}=\sum_{i,j=1}^{q}a_{ij}(x,t)X_{i}X_{j}+\sum_{i=1}^{q}%
b_{i}(x,t)X_{i}-\partial_{t},\ \ \ x\in\mathbf{R}^{n},t\in(0,T),n\geq3.
\]
In this case, by \cite{FGN12}, we need not assume that we have a homogeneous group.
\end{example}

\begin{example}
(Kolmogorov equations) Let%
\begin{equation}
\mathcal{H}=\sum_{i,j=1}^{q}a_{ij}(x,t)\frac{\partial^{2}}{\partial
x_{i}\partial x_{j}}+\sum_{i=1}^{q}b_{i}(x,t)\frac{\partial}{\partial x_{i}%
}+\sum_{i,j=1}^{n}c_{ij}x_{i}\frac{\partial}{\partial x_{j}}+\partial_{t},
\label{KE}%
\end{equation}
where $(x,t)\in\mathbf{R}^{n}\times\mathbf{R}$, $q<n$, with the usual
assumptions on $a_{ij}$ and $b_{i}$, while $C=\{c_{ij}\}$ is a matrix of
constant real numbers. For $(x_{0},t_{0}),$ fixed but arbitrary, we introduce
the vector fields%
\begin{equation}
X_{0}=\sum_{i,j=1}^{n}c_{ij}x_{i}\frac{\partial}{\partial x_{j}}+\partial
_{t},\ \ \ X_{i}=\frac{1}{\sqrt{2}}\sum_{j=1}^{q}a_{ij}(x_{0},t_{0}%
)\frac{\partial}{\partial x_{j}},\ \ \ i\in\{1,...,q\}. \label{XKE}%
\end{equation}
A condition which assures that $\mathcal{H}$ in (\ref{KE}) is a H\"{o}rmander
operator is that $\{X_{0},X_{1},...,X_{q}\}$ in (\ref{XKE}) satisfy the
H\"{o}rmander condition. An equivalent condition is that the matrix $C$ has
the following block structure%
\[%
\begin{pmatrix}
\ast & C_{1} & 0 & \cdots & 0\\
\ast & \ast & C_{2} & \cdots & 0\\
\vdots & \vdots & \vdots & \ddots & \vdots\\
\ast & \ast & \ast & \cdots & C_{k}\\
\ast & \ast & \ast & \cdots & \ast
\end{pmatrix}
\]
where $C_{j}$, for $j\in\{1,...,k\}$, is a $q_{j-1}\times q_{j}$ matrix of
rank $q_{j},$ $1\leq q_{k}\leq...\leq q_{1}\leq q=q_{0}.$ Further,
$q+q_{1}+...+q_{k}=n$, while $\ast$ represents arbitrary matrices with
constant entries. In the case of Kolmogorov equations, results on existence of
solutions was proved in \cite{FPP}.
\end{example}

\begin{example}
For $(x,y,z,w,t)\in\mathbf{R}^{5}$, consider the vector fields%
\[
X=\partial_{x}-xy\partial_{t},\ \ \ Y=\partial_{y}+x\partial_{w}%
,\ \ \ Z=\partial_{z}+x\partial_{t}.
\]
These vector fields satisfy H\"{o}rmanders condition since%
\[
W=[X,Y]=\partial_{w}+x\partial_{t},\ \ \ T=[X,Z]=\partial_{t}.
\]
We note that the Lie algebra generated by these vector fields are nilpotent of
step 4, but we do not have a stratified group since $\partial_{t}%
=[X,Z]=[X,W]$. Moreover, the group law $\circ$ is given by%
\[
(x,y,z,w,t)\circ(\xi,\eta,\zeta,\omega,\tau)=(x+\xi,y+\eta,z+\zeta
,w+\omega+x\eta,t+\tau-\frac{1}{2}y\xi^{2}-x\xi\eta+x\zeta+x\omega),
\]
and we can define (non-unique) translations%
\[
D_{\lambda}(x,y,z,w,t)=(\lambda x,\lambda y,\lambda^{2}z,\lambda^{2}%
w,\lambda^{3}t).
\]
This is neither a subelliptic parabolic equation, nor is it a Kolmogorov type
equation, and the results presented here is therefore new.
\end{example}

\begin{example}
(Link of groups) Following \cite{LK}, we can link groups together. The
simplest example is obtained if we define the vector fields%
\[
X_{0}=x\partial_{w}-\partial_{t},\ \ \ X_{1}=\partial_{x}+y\partial
_{s},\ \ \ X_{2}=\partial_{y}-x\partial_{s},
\]
for $(x,y,s,w,t)\in\mathbf{R}^{5}$. Then, in the variables $(x,y,s,t)$ we get
the heat operator on the Heisenberg group, while in the variables $(x,y,s,w)$
we get a Kolmogorov operator. This again defines a homogeneous H\"{o}rmander
operator, which previously have not been studied in the setting of obstacle problems.
\end{example}


\begin{thebibliography}{99}                                                                                               %


\bibitem {B69}J.-M. Bony, \textit{Principe du maximum, in\'{e}galit\'{e} de
Harnack et unicit\'{e} du probl\`{e}me de Chauchy pour les op\'{e}rateurs
elliptiques d\'{e}g\'{e}n\'{e}r\'{e}s}, Ann. Inst. Fourier (Grenoble)
\textbf{19} (1) (1969), 277-304.

\bibitem {BB00b}M. Bramanti, L. Brandolini, $L^{p}$\textit{ estimates for
uniformly hypoelliptic operators with discontinuous coefficients on
homogeneous groups}, Rend. Sem. Mat. Univ. Pol. Torino, \textbf{58} (4)
(2000), 389-433.

\bibitem {BBP}M. Bramanti, L. Brandolini, M. Pedroni, On the lifting and
approximation theorem for nonsmooth vector fields. ArXiv: 1002.1331v1 5 feb
2010. To appear on Indiana University Mathematics Journal.

\bibitem {BB07}M. Bramanti, L. Brandolini, \textit{Schauder estimates for
parabolic nondivergence operators of H\"{o}rmander type}, J. Differential
Equations \textbf{234} (2007),77-245.

\bibitem {BZ11}M. Bramanti, M. Zhu, $L^{p}$ and Schauder estimates for
nonvariational operators structured on H\"{o}rmander vector fields with drift,
preprint, 2011.

\bibitem {BLU}A. Bonfiglioli, E. Lanconelli, and F. Uguzzoni, "Stratified Lie
groups and potential theory for their sub-Laplacians", Springer Monographs in
Mathematics, Springer, Berlin, 2007.

\bibitem {C98}L. A. Caffarelli, \textit{The obstacle problem revisited}, J.
Fourier Anal. Appl. \textbf{4} (4-5) (1998), 383--402.

\bibitem {CPS}L. Caffarelli, A. Petrosyan, and H. Shahgholian,
\textit{Regularity of a free boundary in parabolic potential theory}, J. Amer.
Math. Soc., \textbf{17} (2004), 827-869 (electronic).

\bibitem {Cho40}W. Chow,\textit{\ \"{U}ber Systeme von linearen partiellen
Differentialgleichungen erster Ordnung}, Math. Ann. \textbf{117} (1940), 98--105.

\bibitem {DGP07}D. Danielli, N. Garofalo, and A. Petrosyan, \textit{The
sub-elliptic obstacle problem: }$C^{1,\alpha}$\textit{-regularity of the free
boundary in Carnot groups of step two}, Adv. Math., \textbf{211} (2007), 485-516.

\bibitem {DGS03}D. Danielli, N. Garofalo, and S. Salsa, \textit{Variational
inequalities with lack of elliptic- ity. I. Optimal interior regularity and
non-degeneracy of the free boundary}, Indiana Univ. Math. J. \textbf{52}
(2003), 361-398.

\bibitem {F75}G. B. Folland, \textit{Subelliptic estimates and function spaces
on nilpotent Lie groups}, Ark. Mat. \textbf{13} (1975), 161-207.

\bibitem {FPP}M. Di Francesco, A. Pascucci, S. Polidoro, \textit{The obstacle
problem for a class of hypoelliptic ultraparabolic equations}, Proc. R. Soc.
Lond. A, \textbf{464} (2008), 155-176.

\bibitem {Fr72}J. Frehse, \textit{On the regularity of the solution of a
second order variational inequality}, Bolletino della Unione Matematica
Italiana\textbf{ 4} (6)(1972), 312-215.

\bibitem {F11}M. Frentz, Regularity in the Obstacle Problem for Parabolic
Non-divergence Operators of H\"{o}rmander type, submitted.

\bibitem {FGN12}M. Frentz, E. G\"{o}tmark, K. Nystr\"{o}m, \textit{The
obstacle problem for parabolic non-divergence form operators of H\"{o}rmander
type}, J. Differential Equations \textbf{252} (9) (2012), 5002-5041.

\bibitem {FNPP}M. Frentz, K. Nystr{\"{o}}m, A. Pascucci, and S. Polidoro,
\textit{Optimal regularity in the obstacle problem for Kolmogorov operators
related to American Asian options}, Math. Ann. \textbf{347} (4) (2010), 805-838.

\bibitem {FA75}A. Friedman, \textit{Parabolic variational inequalities in one
space dimension and smoothness of the free boundary}, J. Funct. Anal.
\textbf{18} (1975), 151-176.

\bibitem {FA82}A. Friedman, "Variational principles and free-boundary
problems", Wiley, New York, 1982.

\bibitem {H67}L. H\"{o}rmander, \textit{Hypoelliptic second order differential
equations}, Acta. Math. \textbf{119} (1967), 147-171.

\bibitem {KS80}D. Kinderlehrer, G. Stampacchia, "An Introduction to
Variational Inequalities and Their Applications", New York, Academic Press, 1980.

\bibitem {KS}D. Kinderlehrer, G. Stampacchia, "An introduction to variational
inequalities and their applications", vol. 31 of Classics in Applied
Mathematics, SIAM, Philadelphia, PA, 2000.

\bibitem {LK}E. Lanconelli, A. E. Kogoj, \textit{Link of groups and
homogeneous H\"{o}rmander operators}, Proceedings of the AMS \textbf{135 }(7)
(2007), 2019--2030.

\bibitem {LS69}H. Lewy and G. Stampacchia, \textit{On the regularity of the
solution of a variational inequality}, Comm. Pure Appl. Math. \textbf{22}
(1969), 153--188.

\bibitem {Lu96}G. Lu, \textit{Embedding theorems into Lipschitz and BMO spaces
and applications to quasilinear subelliptic differential equations},
Publicacions Matem`atiques \textbf{40} (1996), 301--329.

\bibitem {vM}P. van Moerbeke, \textit{Optimal stopping and free boundary
problems}, Rocky Mountain J. Math. \textbf{4} (1974), 539-578. Papers arising
from a Conference on Stochastic Differential Equations (Univ. Alberta,
Edmonton, Alta., 1972).

\bibitem {vM1}P. van Moerbeke,\textit{ An optimal stopping problem with linear
reward}, Acta Math. \textbf{132} (1974), 111-151.

\bibitem {McK}H. P. McKean, \textit{A free boundary problem for the heat
equation arising from a problem in mathematical economics}, Indust. Management
Rev. \textbf{6} (1965), 32-39.

\bibitem {NSW85}A. Nagel, E. M. Stein, S. Wainger, \textit{Balls and metrics
defined by vector fields I: Basic properties}, Acta Math. \textbf{155} (1985), 130-147.

\bibitem {P08}A. Pascucci, \textit{Free boundary and optimal stopping problems
for American Asian options}, Finance Stoch. \textbf{12} (2008), 21-{41. }

\bibitem {S}E. M. Stein, "Harmonic analysis:\ real-variable methods,
orthogonality, and oscillatory integrals", Princeton University Press,
Princeton, 1993.
\end{thebibliography}
\end{document}